\documentclass[11pt,a4paper]{amsart}
\usepackage[utf8]{inputenc}
\usepackage{faktor}
\usepackage[T1]{fontenc}
\usepackage{lmodern}
\usepackage[english]{babel}
\usepackage{amsmath}
\usepackage{xfrac}
\usepackage{esint}
\usepackage{stmaryrd,mathrsfs,bm,amsthm,yfonts,amssymb,color,braket,booktabs,graphicx,graphics,amsfonts}
\usepackage{latexsym,microtype,indentfirst,hyperref}
\usepackage{xcolor}
\usepackage{courier}
\usepackage[colorinlistoftodos]{todonotes}

\usepackage{graphicx,color,longtable}

\usepackage{amsmath,amsfonts,amssymb,amsthm,mathrsfs,amsopn,mathtools,faktor,braket,stmaryrd}
\usepackage{tikz-cd}
\usepackage{esint}
\usepackage{latexsym}
\usepackage[numbers]{natbib}

\usepackage{hyperref}

\usepackage{bbm}

\usepackage{latexsym}




\RequirePackage[left=2cm,top=1.1cm,bottom=1.3cm,right=2cm,includefoot,includehead]{geometry}

\DeclarePairedDelimiter{\abs}{|}{|}
\DeclarePairedDelimiter{\coup}{(}{)}				
\DeclarePairedDelimiter{\cquad}{[ }{]}
\DeclarePairedDelimiter{\norm}{\lVert}{\rVert}
\DeclarePairedDelimiter{\scal}{\langle}{\rangle}

\title[Stability for anisotropic nearly umbilical and quasi Einstein hypersurfaces]{Absence of bubbling phenomena for non convex anisotropic nearly umbilical and quasi Einstein hypersurfaces}
\author[A. De Rosa]{Antonio De Rosa}
\address{A. De Rosa: Department of Mathematics, University of Maryland, 4176 Campus Dr, College Park, MD 20742, USA}
\email{anderosa@umd.edu}

\author[S. Gioffrè]{Stefano Gioffrè}
\address{S. Gioffrè: Institut f\"ur Mathematik, Universit\"at Z\"urich, Winterthurerstrasse 190, CH-8057 Z\"urich, Switzerland}
\email{stefano.gioffre@math.uzh.ch}

\newtheorem{teo}{Theorem}[section]
\newtheorem{prop}[teo]{Proposition}
\newtheorem{cor}[teo]{Corollary}
\newtheorem{lemma}[teo]{Lemma}
\newtheorem{remark}[teo]{Remark}
\newtheorem{defi}[teo]{Definition}
\numberwithin{equation}{section}

\newcommand{\N}{\mathbb{N}}
\newcommand{\R}{\mathbb{R}}
\newcommand{\B}{\mathbb{B}}

\newcommand{\erre}{\mathbb{R}}
\newcommand{\esse}{\mathbb{S}}
\newcommand{\debto}{\rightharpoonup}
\newcommand{\fromto}[2]{\colon #1 \longrightarrow #2}
\newcommand{\daA}[2]{\colon #1 \longrightarrow #2}

\newcommand{\restr}[2]{\left. #1 \right|_{#2}}

\newcommand{\W}{\mathcal{W}}
\newcommand{\PF}{\mathcal{F}}

\newcommand{\Sdot}{\mathring{S}}
\newcommand{\hdot}{\mathring{h}}
\newcommand{\mR}{\mathcal{R}}

\newcommand{\tRic}{\mathring{\Ric}}

\newcommand{\Scal}{R}
\newcommand{\nomizu}{\owedge}
\DeclareMathOperator{\grad}{grad}

\renewcommand{\epsilon}{\varepsilon}
\renewcommand{\phi}{\varphi}
\renewcommand{\theta}{\vartheta}

\renewcommand{\S}{\mathbb{S}}
\newcommand{\gf}{\mathfrak{f}}

\newcommand{\gu}{\mathfrak{u}}

\newcommand{\Gammag}{_{g}\Gamma}

\DeclareMathOperator{\Ric}{Ric}

\DeclareMathOperator{\divv}{div}
\DeclareMathOperator{\Vol}{Vol}
\DeclareMathOperator{\hd}{HD}

\DeclareMathOperator{\Id}{Id}
\DeclareMathOperator{\diam}{diam}
\DeclareMathOperator{\Lip}{Lip}
\DeclareMathOperator{\Riem}{Riem}
\DeclareMathOperator{\tr}{tr}

\DeclareMathOperator{\Graph}{Graph}

\date{}
\makeindex
\allowdisplaybreaks

\begin{document}

\begin{abstract}
We prove that, for every closed (not necessarily convex) hypersurface $\Sigma$ in $\mathbb{R}^{n+1}$ and every $p>n$, the $L^p$-norm of the trace-free part of the anisotropic second fundamental form controls from above the $W^{2, \, p}$-closeness  of $\Sigma$ to the Wulff shape. In the isotropic setting, we provide a simpler proof. This result is sharp since in the subcritical regime $p\le n$, the lack of convexity assumptions may lead in general to bubbling phenomena. 
Moreover, we obtain a stability theorem for quasi Einstein (not necessarily convex) hypersurfaces and we improve the quantitative estimates in the convex setting.
\end{abstract}
\maketitle

\section{Introduction}\label{Intr}
The umbilical theorem, \cite[Lemma 1, p. 8]{Spivak99}, is a rigidity result which states that: given a closed, connected and smooth hypersurface $\Sigma$ of $\R^{n+1}$, if $\Sigma$ is umbilical, i.e. the trace free part of the second fundamental form is constantly equal to $0$, then $\Sigma$ is homothetic to a sphere.  The stability of this result has been addressed in \cite{DLM,DLC0,GiofSec,Daniel,JR1,JR2,JR3}, and produced important applications in the foliation of asymptotically flat three–manifolds by surfaces of prescribed mean curvature (see \cite{LM,LMS,Met}).

The umbilical theorem holds also in the anisotropic setting: in \cite{HeLi} it is shown that the only smooth closed hypersurface with anisotropic second fundamental form which is a constant multiple of the identity is the Wulff shape, see also \cite{DRKS} for the low-regularity case of finite perimeter sets. In \cite[Theorem 1.2]{GiofDeR}, the authors have recently proved qualitative and quantitative stability for the anisotropic rigidity result. Namely, given $p\in (1, \, +\infty)$ and $\Sigma$ a closed hypersurface in $\R^{n+1}$, which is the boundary of a convex, open set, then the $W^{2, \, p}$-closeness of $\Sigma$ to the Wulff shape is controlled by the $L^p$-norm of the trace-free part of the anisotropic second fundamental form.
For $n\geq 3$, the convexity assumption on the hypersurface $\Sigma$ is a necessary condition in order to avoid bubbling phenomena, as observed with a counterexample in \cite[Appendix A]{GiofDeR}. In this respect, in the recent paper \cite{DMMN}, it is proven that if $\Sigma$ is a closed hypersurface (not necessarily convex) with anisotropic mean curvature $L^2$-close to a constant, then $\Sigma$ is $L^1$-close to a finite union of Wulff shapes, extending the seminal work \cite{cir}.

The aim of this paper is to show that in the supercritical regime $p>n$, the convexity assumption on $\Sigma$ can be dropped. This problem was open also for the area funtional. In Section \ref{Isot} we provide a simpler proof in the isotropic case, while in Section \ref{aniso} we give the general proof for the anisotropic setting. 
Moreover, in Section \ref{Ricci}, we prove a similar theorem for non convex, quasi Einstein hypersurfaces. If $n \ge 3$ and $\Sigma$ is an Einstein closed hypersurface in $\erre^{n+1}$, it is well-known that it must be a round sphere. We prove that if an hypersurface is quasi Einstein in an $L^p$-sense, then it is $W^{2, \, p}$-close to a sphere with a quantitative estimate.

\subsection*{Nearly umbilical hypersurfaces}
In order to state our main results, we introduce some notation. We consider a smooth anisotropic function defined on the $n$-sphere:
$$F \daA{\esse^n}{(0, \, \infty)}.$$
For every  closed smooth hypersurface $\Sigma$ in $\erre^{n+1}$, we define its \textit{anisotropic surface energy} as
\begin{equation*}
\PF (\Sigma):= \int_\Sigma F(\nu_\Sigma) \, dV,
\end{equation*}
where $\nu_\Sigma$ will denote throughout the paper the outer normal vector field associated to $\Sigma$.
 In particular, the isotropic surface energy $\Vol_n (\Sigma)$ corresponds to the energy $\PF (\Sigma)$  associated to the function $F\equiv 1$. An increasing interest has been recently devoted to anisotropic energy functionals \cite{DDG2017,DPDRG2018,DDG,DDH,DeRosa,DRK}.

In \cite{Taylor}, J.\ Taylor has proved that the isoperimetrical shape, i.e. the solution of the variation problem
\begin{equation*}\label{AnisoProblem}
\inf \set{ \PF(\Sigma) \, : \, \Sigma = \partial U, \, |U|=m}, \quad \mbox{with $m>0$ fixed}
\end{equation*}
is homothetic to a closed, convex hypersurface $\W$ called \textit{Wulff shape}. This hypersurface is explicitely defined by the equation
\begin{equation*}\label{gauge}
\W := \set{F^* = 1},
\end{equation*}
where $F^* : \R^{n+1} \mapsto [0,+\infty)$ is the gauge function defined below:
$$F^*(x) := \sup_{v \in \erre^{n+1}} \left\{\langle x, v \rangle : |v|F\left(\frac{v}{|v|}\right) \leq 1\right\}.$$
We recall that the differential of the gauge function satisfies the following property, see \cite[p. 8]{Robin}: 
\begin{equation}\label{Robin}
\restr{dF^*}{z}[c] = \langle \nu_{\W}(z), \, c\rangle , \qquad \forall z \in \W.
\end{equation}

Denoting by $\restr{D^2 F}{x}$ the intrinsic Hessian of $F$ on $\esse^n$ at the point $x$, we define the map $A_F:x \in \esse^n \mapsto \restr{A_F}{x}$ valued in the space of symmetric matrices as follows
\begin{equation*}\label{AF}
\restr{A_F}{x}[z] := \restr{D^2 F}{x}[z] + F(x)z \quad \mbox{ for every } x \in \esse^n, \, z \in T_x \esse^n.
\end{equation*}
Throughout the paper, we will assume that $F$ is an \textit{elliptic integrand}, i.e. $\restr{D^2 F}{x}$ is positive definite at every $x \in \esse^n$. 

For any smooth closed hypersurface $\Sigma$, we can define the anisotropic second fundamental form $S_F$ as
\begin{equation*}\label{AnisoSecondFFDef}
\restr{S_F}{x} \daA{T_x\Sigma}{T_x\Sigma},\ \restr{S_F}{x}:= \restr{A_F}{\nu_\Sigma(x)} \circ \restr{d\nu_\Sigma}{x},
\end{equation*}
and the trace free part of  $S_F$ as
$$\Sdot_F:=S_F-\frac{H_F}{n}g,\qquad \mbox{with} \qquad H_F:=\tr_g (S_F),$$
where $g := \delta_{|\Sigma}$ and $\delta$ is the flat metric on $\R^{n+1}$, see \cite{KoisoPalmer}. 

Then the anisotropic rigidity result proved in \cite[Theorem 1.2]{HeLi} can be stated as follows:
\begin{teo}\label{AnisoSecondFFTheorem}
Let $n \ge 2$ and $\Sigma$ be a closed, oriented hypersurface with $\Sdot_F\equiv 0$, then $\Sigma$ is homothetic to the Wulff shape. 
\end{teo}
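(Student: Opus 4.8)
The plan is to reduce everything to the anisotropic Cahn--Hoffman map. I extend $F$ to a $1$-homogeneous function on $\R^{n+1}\setminus\{0\}$ (still denoted $F$) and set $\xi\daA{\Sigma}{\R^{n+1}}$, $\xi(x):=\restr{DF}{\nu_\Sigma(x)}$, where $DF$ is the Euclidean gradient. Two standard facts organize the argument. First, the ambient Hessian of the $1$-homogeneous extension, restricted to $T_\nu\esse^n$, is precisely $\restr{A_F}{\nu}$ as in \eqref{AF}; hence $\restr{d\xi}{x}=\restr{A_F}{\nu_\Sigma(x)}\circ\restr{d\nu_\Sigma}{x}=\restr{S_F}{x}$, i.e.\ the anisotropic shape operator is literally the differential of $\xi$. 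Second, by the Wulff construction $\restr{DF}{\nu}$ is the point of $\W$ with outer normal $\nu$, so $\xi(\Sigma)\subset\W$ and $T_{\xi(x)}\W=\nu_\Sigma(x)^\perp=T_x\Sigma$ for every $x$. With this in hand the proof splits into two steps: (1) show that $H_F$ is constant; (2) integrate to identify $\Sigma$ with a homothety of $\W$.

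For Step (1), let $X\daA{\Sigma}{\R^{n+1}}$ be the position map and work in local coordinates. The hypothesis $\Sdot_F\equiv0$ means exactly $S_F=\lambda\,\Id$ on $T\Sigma$ with $\lambda:=H_F/n$, so by the identification above $\partial_j\xi=\lambda\,\partial_jX$ for every $j$. Differentiating once more and using that mixed partials of the $\R^{n+1}$-valued maps $\xi$ and $X$ commute, I obtain $(\partial_k\lambda)\,\partial_jX=(\partial_j\lambda)\,\partial_kX$. Since $n\ge2$, for each index $k$ there is $j\ne k$ with $\partial_jX,\partial_kX$ linearly independent, which forces $\partial_k\lambda=0$ for all $k$; hence $\lambda$, equivalently $H_F$, is constant. (Equivalently, this is the once-traced anisotropic Codazzi equation $\nabla_k(S_F)^i_j=\nabla_j(S_F)^i_k$ of \cite{HeLi}.)

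For Step (2), since $\lambda$ is a constant and $d\xi=\lambda\,dX$, the map $\xi-\lambda X$ has vanishing differential, hence equals a constant vector $p_0$ on $\Sigma$ (componentwise, if $\Sigma$ is disconnected). Moreover $\lambda\ne0$: if $\lambda\equiv0$ then $\xi$ is constant, so $\nu_\Sigma$ is constant because $\restr{DF}{\cdot}$ is a diffeomorphism of $\esse^n$ onto $\W$ for an elliptic integrand, which is impossible for a closed hypersurface. Therefore $X=\lambda^{-1}(\xi-p_0)$ takes values in the homothetic Wulff shape $\lambda^{-1}(\W-p_0)$; since $\Sigma$ and $\lambda^{-1}(\W-p_0)$ are closed hypersurfaces sharing the tangent plane at every point of $\Sigma$, the inclusion $\Sigma\subset\lambda^{-1}(\W-p_0)$ makes $\Sigma$ open and closed in the connected set $\lambda^{-1}(\W-p_0)$, hence equal to it. Thus $\Sigma$ is homothetic to $\W$ (a finite union of such in the disconnected case).

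The only genuinely nontrivial input is the opening identification $S_F=d\xi$ together with $T_{\xi(x)}\W=T_x\Sigma$: once it is in place, Step (1) is a one-line consequence of commuting second derivatives and Step (2) is an elementary integration. The points to handle carefully are the sign and homogeneity conventions relating $\restr{A_F}{\nu}$ to the ambient Hessian of the $1$-homogeneous extension, the degenerate case $\lambda\equiv0$, and — if disconnected $\Sigma$ is allowed — the componentwise bookkeeping.
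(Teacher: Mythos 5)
The paper does not prove Theorem~\ref{AnisoSecondFFTheorem} at all: it is cited to He--Li \cite{HeLi} and used as a black box, so there is no ``paper's own proof'' to compare against. What you have written is a self-contained argument, and it is correct. Your identification $d\xi=S_F$ for the Cahn--Hoffman map $\xi=DF\circ\nu_\Sigma$ (built from the $1$-homogeneous extension of $F$) is the right structural fact: the Euler identity forces $\nu\in\ker D^2F|_\nu$, and on $T_\nu\esse^n$ the Euclidean Hessian equals $D^2_{\esse^n}F+F\,\id=A_F$, so $d\xi=A_F|_{\nu_\Sigma}\circ d\nu_\Sigma=S_F$. From $\Sdot_F\equiv0$ you get $\partial_j\xi=\lambda\,\partial_jX$, and commuting mixed partials of the ambient-valued maps gives $(\partial_k\lambda)\partial_jX=(\partial_j\lambda)\partial_kX$; with $n\ge2$ this kills $d\lambda$, and then $\xi-\lambda X$ is constant, placing $\Sigma$ inside a scaled translate of $\W$; the open-and-closed argument closes the loop. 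This is a clean, local, differential-geometric route to the rigidity statement, in the spirit of the classical umbilical theorem, rather than the integral-formula route suggested by the title of \cite{HeLi}; it is arguably the more transparent proof for the purposes of this paper.

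One small point worth making explicit. You only argue $\lambda\neq0$, but the conclusion that $\Sigma$ is homothetic to $\W$ requires $\lambda>0$ in general: for a non-symmetric integrand, $-\W$ need not be a translate of $\W$, so a negative dilation would give a reflected Wulff shape. This is easily closed: since $\nu_\Sigma(x)=\nu_{\W}(\xi(x))$ by construction, while the outer normal of the hypersurface $\lambda^{-1}(\W-p_0)$ at $X(x)=\lambda^{-1}(\xi(x)-p_0)$ equals $\mathrm{sgn}(\lambda)\,\nu_{\W}(\xi(x))$, consistency of the two expressions for the outer normal of $\Sigma$ forces $\lambda>0$. With this one line added, the proof is complete (componentwise, as you note, if $\Sigma$ is disconnected).
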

Theorem \ref{AnisoSecondFFTheorem} turns out to be stable with respect to the $W^{2,p}$-norm, under the assumption of convexity of the surface $\Sigma$, as proved in \cite[Theorem 1.2]{GiofDeR}. Convexity is deeply used in \cite{GiofDeR}; for instance it directly implies the existence of a parametrization of $\Sigma$ on $\W$. Moreover it is a necessary assumption for general $p$, as showed with the couterexample \cite[Appendix A]{GiofDeR}.  In Section \ref{aniso} of this paper, we show how to drop the convexity assumption in \cite[Theorem 1.2]{GiofDeR} for $p>n$, proving the following:
\begin{teo}\label{MainThm}
Let $n \ge 2$, $\Sigma$ be a closed hypersurface in $\R^{n+1}$ and $p>n$ be given. We assume that there exists $c_0>0$ such that $\Sigma$ satisfies the conditions 
\begin{equation}\label{assumption}
\Vol_n(\Sigma) = \Vol_n(\W), \quad \lVert S_F\rVert_{L^p(\Sigma)} \le c_0.
\end{equation}
There exist $\delta_0$, $C_0>0$ depending only on $n$, $p$, $c_0$ and $\W$ such that, if
\begin{equation}\label{DeltaAnisPinching4Ch}
\lVert \Sdot_F\rVert_{L^p(\Sigma)} \le \delta_0,
\end{equation}
then there exist a parametrization $\psi \fromto{\W}{\Sigma}$ and a vector $c=c(\Sigma)$ satisfying
\begin{equation}\label{MainThmEst}
\lVert \psi - \Id - c\rVert_{W^{2, \, p}(\W)} \le C_0 \lVert \Sdot_F\rVert_{L^p(\Sigma)}.
\end{equation}
\end{teo}
The strategy to remove the convexity assumption in Theorem \ref{MainThm} is the following. First we want to obtain a graph parametrization of $\Sigma$ on $\W$ with small $C^1$-norm, Proposition \ref{QualProp4Ch}. Since a priori $\Sigma$ is not convex, we do not have a priori a parametrization of $\Sigma$ on $\W$. To solve this obstruction, we implement a compactness theorem for immersions in $W^{2,p}$, Proposition \ref{QualCptness4Ch}. The pinching condition \eqref{assumption} is inherited by the limit immersion, thanks to the lower semicontinuity of $\lVert S_F\rVert_{L^p}$ with respect to the $W^{2,p}$-weak convergence, Lemma \ref{AniSemiCont}. Hence, we obtain a map from $\W$ to $\Sigma$ satisfying a $C^1$-bound. However, to produce the desired graph parametrization, we need to show that the projection map from $\Sigma$ onto $\W$ is a diffeomorphism, via a tilt-estimate of the normal. This provides Proposition  \ref{QualProp4Ch}, from which we can conclude the proof of Theorem \ref{MainThm} via a centering argument.

In Section \ref{Isot}, we provide an easier proof of Theorem \ref{MainThm} in the isotropic case $F\equiv 1$, which generalizes the main result in \cite{GiofSec} to not necessarily convex hypersurfaces. In this isotropic setting, the Wulff shape is the round sphere $\S^n$, $S_F$ becomes the classical second fundamental form $h$, and we can give a simpler proof, which does not involve abstract compactness arguments, but simple topological considerations.
\begin{teo}\label{MainCor}
Let  $n \geq 2 $, $\Sigma$ be a closed hypersurface in $\R^{n+1}$ and let $p>n$ be given. We assume that there exists $c_0>0$ such that  $\Sigma$ satisfies the conditions 
\begin{equation}\label{assumptionn}
\Vol_n(\Sigma) = \Vol_n(\S^n), \quad \lVert h\rVert_{L^p(\Sigma)} \le c_0.
\end{equation}
There exist positive numbers $\delta_0$, $C_0>0$ depending only on $n$, $p$, $c_0$ with the following property: if
\[
\lVert \mathring{h}\rVert_{L^p(\Sigma)} \le \delta_0
\]
then there exists a vector $c=c(\Sigma)$ such that $\Sigma - c$ is a graph over the sphere, namely there exists a parametrization
\[
\psi \fromto{\S^n}{\Sigma},\ \psi(x):= e^{f(x)} x,
\] 
and $f$ satisfies the estimate
\begin{equation}\label{MainCorEst}
\lVert f\rVert_{W^{2, \, p}(\S^n)} \le C\lVert \mathring{h}\rVert_{L^p(\Sigma)}.
\end{equation}
\end{teo}

\subsection*{Quasi Einstein hypersurfaces}
The second main result of this paper concerns quantitative stability estimates for quasi Einstein hypersurfaces. In this respect, we need further notation. Let $\Sigma$ be a closed hypersurface in $\erre^{n+1}$ and let us denote with $\Ric$ and $R$ respectively the Ricci tensor and the scalar curvature. We say that $\Sigma$ is an Einstein manifold if the trace-free part of the Ricci tensor 
\begin{equation*}\label{Einstein}
\tRic := \Ric - \frac{1}{n} R g
\end{equation*}
 is identically $0$. In the '$30$s Thomas (see \cite{Thomas}) and Fialkov (see \cite{Fialkow}) independently proved that an Einstein hypersurface $\Sigma$ in $\erre^{n+1}$ with positive scalar curvature is isometric to the round sphere:
\begin{teo}[\cite{Fialkow}, \cite{Thomas}]\label{RigidityRicci2Ch}
Let $\Sigma$ be a closed, connected hypersurface in $\R^{n+1}$ such that 
\[
\tRic=0
\]
at every point. Then $\Sigma$ is a round sphere.
\end{teo} 
The stability properties of this result in the convex setting have been studied in \cite{GiofEin}. The assumption for the validity of the main result in \cite{GiofEin} is the control $0 \le h \le \Lambda g$ on the second fundametal form $h$ of $\Sigma$, which is clearly sub-optimal. Indeed, the bound from below on $h$ implies the convexity of $\Sigma$ (see \cite[Prop. 3.2]{Daniel} for instance), while the bound from above implies \textit{a posteriori} a $W^{2, \, \infty}$ bound on the closeness to the sphere. Since the main result in \cite{GiofEin} provides just a $W^{2, \, p}$ bound, this hypothesis appears abundant. 
One of the aims of this paper is to weaken the assumption on $h$, allowing us to prove in Section \ref{Ricci} the following  theorems:

\begin{teo}\label{Thm1}
Let $n \ge 3$, $\Sigma$ be a closed hypersurface in $\R^{n+1}$ with induced metric $g$ and let $1<p<\infty$ be given. We assume that $\Sigma$ satisfies the conditions 
\begin{equation}\label{PinchingStrict}
\Vol_n(\Sigma) = \Vol_n(\S^n), \quad \Lambda g \le h \mbox{ for some } \Lambda>0.
\end{equation}
There exist $\delta_0$, $C_0>0$ depending only on $n$, $p$, $\Lambda$ with the following property: if
\[
\left \|\tRic \right\|_{L^p(\Sigma)} \le \delta_0
\]
then there exists a parametrization $\psi \fromto{\S^n}{\Sigma}$ and a vector $c=c(\Sigma)$ such that 
\begin{equation}\label{Thm1Est}
\left \|\psi - \Id - c\right \|_{W^{2, \, p}(\S^n)} \le C_0 \left \|\tRic \right \|_{L^p(\Sigma)}.
\end{equation}
\end{teo}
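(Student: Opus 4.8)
The plan is to reduce Theorem~\ref{Thm1} to the already-established anisotropic (or isotropic) umbilicity stability, namely Theorem~\ref{MainThm} with $F\equiv 1$, by showing that a quantitative control on $\|\tRic\|_{L^p(\Sigma)}$ yields a quantitative control on $\|\hdot\|_{L^p(\Sigma)}$, the trace-free part of the ordinary second fundamental form. The bridge is the Gauss equation: for a hypersurface in $\R^{n+1}$ one has $\Ric = H h - h\circ h$ (in coordinates $R_{ij} = H h_{ij} - h_{ik}h^k_{\ j}$) and $R = H^2 - |h|^2$. Taking trace-free parts gives an algebraic identity of the schematic form $\tRic = H\,\hdot - (h\circ h)^{\!\circ}$, where $(h\circ h)^{\!\circ}$ is the trace-free part of $h^2$. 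The key point is that the quadratic term, when expressed in terms of $\hdot$ and $H$, is $(h\circ h)^{\!\circ} = \tfrac{2H}{n}\hdot + (\hdot\circ\hdot)^{\!\circ}$ (using $h = \hdot + \tfrac{H}{n}g$), so the identity rearranges to
\begin{equation*}
\tRic = \frac{(n-2)H}{n}\,\hdot - (\hdot\circ\hdot)^{\!\circ}.
\end{equation*}
Since $n\ge 3$, the coefficient $(n-2)H/n$ is the crucial quantity, and the hypothesis $\Lambda g \le h$ forces $H \ge n\Lambda > 0$ pointwise, making this coefficient bounded below away from zero.

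With this identity in hand, the first step is to derive an a priori $L^\infty$ (or at least $L^p$) bound on $h$, hence on $H$ and $\hdot$. From $\Lambda g \le h$ and the volume normalization $\Vol_n(\Sigma) = \Vol_n(\S^n)$, together with the fact that the $L^1$-norm of $H$ over a closed hypersurface is controlled (Minkowski-type / Gauss--Bonnet considerations, or simply that for a mean-convex closed hypersurface the Willmore-type energy is bounded by the rigidity regime), one expects a uniform bound $\|h\|_{L^p(\Sigma)} \le C(n,p,\Lambda)$ — alternatively one can argue that once $\tRic$ is small the hypersurface is nearly a sphere and all curvatures are close to $1$, bootstrapping the bound. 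I would first establish such a bound $c_0 = c_0(n,p,\Lambda)$ so that the hypothesis $\|S_F\|_{L^p(\Sigma)} \le c_0$ of Theorem~\ref{MainThm} (with $F\equiv 1$, so $S_F = h$) is met.

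The second step is the pointwise/$L^p$ inversion: from the displayed identity,
\begin{equation*}
\hdot = \frac{n}{(n-2)H}\Bigl(\tRic + (\hdot\circ\hdot)^{\!\circ}\Bigr),
\end{equation*}
so $|\hdot| \le \frac{1}{(n-2)\Lambda}\bigl(|\tRic| + C|\hdot|^2\bigr)$ pointwise. Raising to the $p$-th power, integrating, and using the a priori $L^\infty$ or $L^p$ bound on $\hdot$ to absorb (via smallness, coming from $\delta_0$ small enough that $\|\hdot\|$ is small — this itself needs the iteration below) the quadratic term into the left side gives $\|\hdot\|_{L^p(\Sigma)} \le C(n,p,\Lambda)\,\|\tRic\|_{L^p(\Sigma)}$. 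There is a mild circularity here: to absorb $|\hdot|^2$ one wants $\|\hdot\|_{L^\infty}$ small, but a priori we only have a bound, not smallness. This is resolved by a bootstrap: one first uses the qualitative version of Theorem~\ref{MainThm} (or a compactness argument) to conclude that $\|\tRic\|_{L^p}\le\delta_0$ small forces $\Sigma$ close to a sphere in $C^1$, hence $\|\hdot\|_{L^\infty}$ small, and only then runs the quantitative absorption. I expect this bootstrap — carefully choosing $\delta_0$ and threading the a priori bound through — to be the main technical obstacle.

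The final step is to invoke Theorem~\ref{MainThm} with $F\equiv 1$: the volume normalization $\Vol_n(\Sigma)=\Vol_n(\S^n)=\Vol_n(\W)$, the bound $\|h\|_{L^p}\le c_0$, and the smallness $\|\hdot\|_{L^p}\le \delta_0' := C\delta_0$ just obtained give a parametrization $\psi\fromto{\S^n}{\Sigma}$ and a vector $c$ with
\begin{equation*}
\|\psi - \Id - c\|_{W^{2,p}(\S^n)} \le C_0\,\|\hdot\|_{L^p(\Sigma)} \le C_0'\,\|\tRic\|_{L^p(\Sigma)},
\end{equation*}
which is exactly \eqref{Thm1Est}. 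The constants $\delta_0, C_0$ in the statement are then the composition of those from the curvature-inversion step with those from Theorem~\ref{MainThm}, depending only on $n$, $p$, $\Lambda$.
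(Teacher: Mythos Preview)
Your overall strategy is exactly the paper's: control $\|\hdot\|_{L^p}$ by $\|\tRic\|_{L^p}$ via the Gauss equations, then feed this into an umbilicity stability theorem. However, the way you carry out both steps has two concrete gaps.

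\textbf{The algebraic step.} Your identity $\tRic = \tfrac{(n-2)H}{n}\hdot - (\hdot\circ\hdot)^{\!\circ}$ is correct, but it leaves a quadratic remainder that forces the bootstrap you yourself flag as circular. The paper avoids this entirely with a direct eigenvalue computation. Diagonalising $h$ with eigenvalues $\lambda_1,\dots,\lambda_n$, the Ricci eigenvalues are $\Lambda_j=\lambda_j\sum_{k\neq j}\lambda_k$, and one has the \emph{exact} factorisation
\[
\Lambda_i-\Lambda_j=(\lambda_i-\lambda_j)\sum_{k\neq i,j}\lambda_k.
\]
Since $h\ge\Lambda g$ gives $\lambda_k\ge\Lambda$ for every $k$, the factor on the right is $\ge(n-2)\Lambda$, and summing over $i\neq j$ yields the pointwise bound $|\hdot|\le C(n,\Lambda)\,|\tRic|$ with no remainder whatsoever. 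No smallness, no $L^\infty$ control, no bootstrap is needed; this is what makes the argument work for every $1<p<\infty$.

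\textbf{The invocation step.} You apply Theorem~\ref{MainThm} with $F\equiv 1$, but that theorem requires $p>n$; Theorem~\ref{Thm1} is stated for all $1<p<\infty$. The paper instead observes that $h\ge\Lambda g$ forces $\Sigma$ to be (strictly) convex, and applies the \emph{convex} umbilicity stability result \cite[Theorem~1.1]{GiofSec}, which is valid for the full range $1<p<\infty$ and does not require any a priori bound $\|h\|_{L^p}\le c_0$. Your route, as written, therefore proves the theorem only for $p>n$, and the preliminary step of establishing the pinching bound $c_0=c_0(n,p,\Lambda)$ is both unjustified in your sketch and unnecessary once one uses the convex theorem.
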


\begin{teo}\label{Thm2}
Let $n \ge 3$, $\Sigma$ be a closed hypersurface in $\R^{n+1}$ and let $p>n$ be given. We assume that there exists $c_0>0$ such that  $\Sigma$ satisfies the conditions 
\begin{equation}\label{Pinchingc0}
\Vol_n(\Sigma) = \Vol_n(\S^n), \quad \|h \|_{L^p(\Sigma)} \le c_0.
\end{equation}
Then for every $q \in (n, \, p)$ there exist $\delta_0$, $C_0>0$ depending only on $n$, $p$, $q$, $c_0$ with the following property: if
\begin{equation}\label{DeltaRicciPinching}
\|\tRic \|_{L^p(\Sigma)} \le \delta_0,
\end{equation}
then there exist a parametrization $\psi \fromto{\S^n}{\Sigma}$ and a vector $c=c(\Sigma)$ such that 
\begin{equation}\label{Thm2Est}
\left \|\psi - \Id - c\right \|_{W^{2, \, q}(\S^n)} \le C_0 \left \|\tRic\right \|_{L^p(\Sigma)}^{\alpha},
\end{equation}
where $\alpha$ is defined as:
\[
\alpha(p, \, q) :=
\begin{cases}
1, &\mbox{ if } n < q \le p/2, \\
p/q - 1, &\mbox{ if } p/2 \le q < p.
\end{cases}
\]
\end{teo} 

In Theorem \ref{Thm1} we remove the assumption on the upper bound on $h$, but we strengthen the convexity, with a uniform bound from below on $h$. In Theorem \ref{Thm2} instead, we completely remove any convexity assumption, obtaining the slightly weaker estimate \eqref{Thm2Est}. We conjecture that the exponent $\alpha$ in the inequality \eqref{Thm2Est} is not optimal. 

The assumptions \eqref{assumption}, \eqref{assumptionn} and \eqref{DeltaRicciPinching}  are often referred to as {\em pinching condition} in the literature.


\section{Notation, preliminaries and strategy of the proof}\label{Not}
\subsection*{Notation}
Throughout the paper, we will use the following notation:

\begin{longtable}{c|c}
$\Vol_n$ & $n$-dimensional Hausdorff measure;\\
$\langle \cdot,\cdot \rangle$ & Euclidean scalar product in $\R^{n+1}$; \\
$\langle \cdot,\cdot \rangle_{L^2}$ & scalar product in $L^2$; \\
$d_{\hd}$ & Hausdorff distance; \\
$\esse^n$ & standard sphere in $\R^{n+1}$; \\
$\W$ & Wulff shape; \\
$\Sigma$ & closed, $n$-dimensional hypersurface in $\R^{n+1}$; \\
$\nu_{\Sigma}$ & outer normal vector field associated to $\Sigma$; \\
$\delta$  & standard metric in $\R^{n+1}$; \\
$\sigma$ & standard metric on $\esse^n$; \\
$g$ & restriction of $\delta$ to $\Sigma$;   \\
$h$ & second fundamental form for $\W$ or $\Sigma$ depending on the context;  \\
$\hdot$ & trace-free part of the second fundamental of $\Sigma$; \\
$H$ & classical mean curvature;  \\
$B^g_r(x)$ & geodesic ball in  $\Sigma$ centred in $x$, of radius $r$; \\
$\Riem$ & Riemann tensor associated to the metric $g$; \\ 
$\Ric$ & Ricci tensor associated to the metric $g$; \\ 
$R$ & scalar curvature associated to the metric $g$; \\
$\B^k_r(x)$ & ball in  $\erre^k$ centred in $x$, of radius $r$ (when $x=0$, we write $\B^k_r$); \\
$\partial$ & usual derivative in $\R^{n+1}$; \\
$D$ & Levi-Civita connection associated to $\esse^n$; \\
$\nabla$ & Levi-Civita connection associated to $\Sigma$ or to $\W$. 
 \end{longtable}

\subsection*{Preliminary results} 
Important tools we will use are the graph parametrizations:

\begin{defi}
Let $\Sigma$ be a closed hypersurface in $\R^{n+1}$, and $q \in \Sigma$ a given point. We say that $\phi_q$ is a \textit{graph parametrization around $q$ with width $R>0$} if $\phi_q$ has the following form:
\begin{equation}\label{GraphChart}
\phi_q \fromto{\B^n_R}{\Sigma},\ \phi_q(z) = q + \Phi_q
\begin{pmatrix}
z \\ 
u_q(z)
\end{pmatrix},
\end{equation} 
where $\Phi_q \fromto{\R^{n+1}}{\R^{n+1}}$ is a matrix in the orthogonal group $O(n+1)$  chosen so that $\Phi_q \cquad{\R^n \times \set{0}} = T_q \Sigma$, $\Phi_q[e_{n+1}]=\nu_\Sigma(q)$.
\end{defi}

Graph parametrizations have great importance in the non-convex case, in view of  Lemma \ref{GraphChartLem}, proved in \cite[Lemma 1.7]{Daniel}, and Remark \ref{noconv}, which justifies the need of the assumption $p>n$: 

\begin{lemma}\label{GraphChartLem}\cite[Lemma 1.7]{Daniel}
Let $n \ge 2$ be given. Let $\Sigma$ be a closed hypersurface in $\R^{n+1}$. Assume there exist $L, \, R>0$ with the following property. For every $q \in \Sigma$ there exists a graph parametrization $\phi_q$ around $q$ with width $R>0$ (as in \eqref{GraphChart}), such that $u_q$ is an L-Lipschitz function.

Then, for every $0 < \rho \le R$, the geodesic ball $B^g_\rho(q)$ satisfies the inclusion 
\begin{equation}\label{GeoBallinclusion}
\phi_q\left(\B^n_{\frac{1}{1+L} \rho}\right) \subset B^g_\rho(q) \subset \phi_q \left(\B^n_\rho\right).
\end{equation}
In particular, for every $q \in \Sigma$ the geodesic ball $B^g_R(q)$ is contained in the chart, and $\Sigma$ can be covered with $N$ such geodesic balls, where $N$ is a natural number depending on $n$, $L$, $R$.
\end{lemma}

\begin{remark}\label{noconv}
As shown in \cite[Section 2.1-2.3]{Daniel}, if $\Sigma$ satisfies \eqref{assumptionn} with $p>n$, then we can find positive constants $L$ and $R$ depending on $c_0$, $n$ and $p$ such that: for every $q \in \Sigma$ there exists a graph parametrization $\phi_q$ around $q$ with width $R>0$, such that $u_q$ is an L-Lipschitz function. In particular Lemma \ref{GraphChartLem} applies. This control can be realized also in the anisotropic case. Indeed condition \eqref{assumption} implies condition \eqref{assumptionn}, as shown in \cite[Proposition 3.3]{GiofDeR}. Although \cite[Proposition 3.3]{GiofDeR} is stated requiring the convexity assumption, one can easily check in its proof that the lower bound estimate 
\begin{equation*}
\|h\|_{L^p(\Sigma)}\leq C \|S_F \|_{L^p(\Sigma)}
\end{equation*}
works also in the non convex setting. The convexity assumption is just needed in the proof of the upper bound $\|S_F \|_{L^p(\Sigma)} \leq C(1+\|\Sdot_F\|_{L^p(\Sigma)})$.
\end{remark}

Throughout all the paper, we shall use only the parametrizations provided by Remark \ref{noconv} by means of \eqref{assumptionn} and will denote them by $\phi_q$. We will use the $\phi_q$ to obtain local estimates and  Lemma \ref{GraphChartLem} to make them global. 
Since we will now work with graph parametrizations, we will use the following lemma, which is stated in \cite[Lemma 1.3]{Daniel}:
\begin{lemma}\label{Computations4Ch}
Let $\phi_q$ be a graph parametrization for $\Sigma$. Then the following formulas hold:
\begin{align}
&g_{ij} = \delta_{ij} + \partial_i u_q \partial_j u_q, \label{gGraph} \\
&g^{ij} = \delta^{ij} - \frac{\partial^i u_q \partial^j u_q}{1 + \abs{\partial u_q}^2}, \label{ginvGraph} \\ 
&\nu_{\Sigma} = \frac{1}{\sqrt{1 + \abs{\partial u_q}^2}} \Phi_q 
\begin{pmatrix}
 \partial u_q \\ 
-1
\end{pmatrix} \label{nuGraph} \\
&h^i_j = \partial_i \coup*{\frac{\partial^j u_q}{\sqrt{1 + \abs{\partial u_q}^2}}} \label{AGraph}
\end{align}
\end{lemma}
The proof of Lemma \ref{Computations4Ch} in \cite{Daniel}  is actually made with the graph parametrisation $\phi(x)=(x, \, u(x))$, i.e. with $q=0$, $\Phi_q = \Id$. However, it can be noted that the action of the isometries  does not change the obtained expressions, because the translations disappear with derivatives and the rotations satisfy $\scal{\Phi[v], \, \Phi[w]} = \scal{v, \, w}$.


To prove the main results, we will need an oscillation estimate. This is given by the following proposition, whose proof is postponed to Appendix \ref{AppendixOsc}

\begin{prop}\label{AnisOscLp3Ch}
Let $n\geq 2$, $n<p< \infty$ and $c_0>0$ be given, and let $\Sigma$ be a closed hypersurface in $\R^{n+1}$ with fixed volume $V$. Let $F$ be an elliptic integrand. 
Assume $\Sigma$ satisfies
$$\norm{h}_{L^p(\Sigma)} \le c_0.$$ 
Then the following estimate is satisfied:
\begin{equation*}\label{AnisOscLpEq3Ch}
\min_{\lambda \in \R} \norm{S_F - \lambda \Id}_{L^p(\Sigma)} \le C(n, \,p, \, c_0, \, F) \norm{\Sdot_F}_{L^p(\Sigma)}.
\end{equation*}
\end{prop}



\section{The isotropic case}\label{Isot}
In this section we prove Theorem \ref{MainCor}. We define for a closed hypersurface $\Sigma$ a radial parametrization to be as follows:
\begin{equation}\label{RadialPar}
\psi \fromto{\S^n}{\Sigma},\ \psi(x):= e^{f(x)} x.
\end{equation}
Moreover we define the barycenter of $\Sigma$ as
\begin{equation*}\label{BarycentreSigma}
b(\Sigma):= \fint_{\Sigma} z \, dV_g(z).
\end{equation*}
The main ingredient for the proof of Theorem \ref{MainCor} is the following proposition:
\begin{prop}\label{MainProp14Ch}
Let  $n \geq 2 $ and $p>n$.
For every $\epsilon>0$ there exists $0<\delta_0=\delta_0(n, \, p, \, c_0, \, \epsilon)$ with the following property. 

Let $\Sigma$ be a closed hypersurface in $\R^{n+1}$ satisfying \eqref{assumptionn}. If 
\[
\norm{\hdot}_{L^p(\Sigma)} \le \delta_0,
\]
then, up to translation, the radial parametrization $\psi \fromto{\S^n}{\Sigma}$ as in \eqref{RadialPar} is well defined, and the logarithmic radius $f$ satisfies
\begin{equation*}\label{RadiusfEst4Ch}
\norm{f}_{C^1(\S^n)} \le \epsilon.
\end{equation*}
\end{prop}

Proposition \ref{MainProp14Ch} is the cornerstone of the section, because it builds the radial parametrization and gives a qualitative estimate of it. 

\subsection*{Proof of Proposition \ref{MainProp14Ch}}
We split the proof of Proposition \ref{MainProp14Ch} in two parts. In the first part we achieve a $C^0$-closeness, in the second part we show how to use this result to build the parametrization. 

We start with a preliminary lemma:
 
\begin{lemma}\label{LemmaGraph}
For every $\epsilon>0$ there exists $0<\delta_0=\delta_0(n, \, p, \, c_0, \, \epsilon)$ with the following property. 

Let $\Sigma$ be a closed hypersurface in $\R^{n+1}$ satisfying \eqref{assumptionn}. If $\norm{h - \lambda_0 g}_{L^p} \le \delta_0$ for some $\lambda_0 \neq 0$, then for every $q \in \Sigma$, for every graph parametrization $\phi_q$ around $q$, we have the following estimate:
\begin{equation*}
\norm*{u_q(\cdot) - \lambda_0^{-1} \coup*{ \sqrt{1 - \lambda_0^2 \abs{ \, \cdot \, }^2} - 1 }}_{C^1} \le \epsilon.
\end{equation*}
\end{lemma}
\begin{proof}
By contradiction, let $(\Sigma^k)_{k \in \N}$ be a sequence of closed hypersurfaces satisfying \eqref{assumptionn} and $\lim_k \norm{h^k - \lambda_0 g^k}_{L^p(\Sigma^k)} = 0$. Let $(q^k)_{k \in \N}$ be a sequence of points $q^k \in \Sigma^k$ such that the associated graph parametrizations satisfy 
\[
\norm*{u^k(\cdot) - \lambda_0^{-1} \coup*{ \sqrt{1 - \lambda_0^2 \abs{ \, \cdot \, }^2} - 1 }}_{C^1} \ge \epsilon_0>0.
\]
We show how this is not possible, using an idea of \cite[Cor. 1.2]{Daniel}. Firstly, we can assume w.l.o.g. that every $q^k$ is equal to $\lambda_0^{-1} e_{n+1}$ and $\Phi_{q^k}=\Id$. Since every $\Sigma^k$ satisfies \eqref{assumptionn}, by Remark \ref{noconv} we consider the graph parametrizations $\phi^k$ associated to $q^k$. The properties of $\phi^k$ combined with \eqref{assumptionn} grant us:
\[
\sup_k \norm{u^k}_{W^{2, \, p}(\B^n_R)} \le c(n, \,p, \, c_0)<+\infty.
\]
Let us set $v^k:= \frac{\partial u^k}{\sqrt{1 + \abs{\partial u^k}^2}}$. Then, from \eqref{AGraph} and the contradiction hypothesis, we obtain 
\[
\lim_k \norm{\partial v^k - \lambda_0 \Id}_{L^p(\B_R)} = \lim_k \norm{\partial \coup*{v^k - \lambda_0 x}}_{L^p(\B_R)}=0.
\]
Setting $c^k = \fint v^k$, we get from Sobolev inequalities
\[
\lim_k \norm{v^k - c^k - \lambda_0 x}_{W^{1, \, p}(\B^n_R)} = 0.
\]
Now, since $c^k$ is clearly bounded and $v^k(0)=0$ for every $k\in \N$ and $n<p$, we also obtain the convergence 
\[
\lim_k \norm{v^k - \lambda_0 x}_{W^{1, \, p}(\B^n_R)} = 0.
\]
Let us define the function 
\[
\xi \fromto{\B^n_1}{\R^n},\, \xi(x):= \frac{x}{\sqrt{1 - \abs{x}^2}}.
\]
The function $\xi$ is smooth and has bounded derivatives in the ball $\B^n_{\rho}$ with $\rho\le \frac{1}{2}$. Moreover it satisfies the equality 
\[
\xi(v^k) = \frac{1}{\sqrt{1 + \abs{\partial u^k}^2}} \frac{\partial u^k}{\sqrt{1 - \abs{\partial u^k}^2/\coup*{1 + \abs{\partial u^k}^2}}} = \partial u^k.
\]
We obtain: 
\begin{align*}
\lim_k \norm{\xi(v^k) - \xi(\lambda_0 x)}_{W^{1, \, p}(\B^n_R)} 
&= \norm*{\partial u^k - \frac{\lambda_0 x}{\sqrt{1 - \lambda_0^2 \abs{x}^2}}}_{W^{1, \, p}(\B^n_R)} \\ 
&= \lim_k \norm*{\partial \coup*{ u^k - \lambda_0^{-1}\sqrt{1 - \lambda_0^2 \abs{x}^2} }}_{W^{1, \, p}(\B^n_R)}=0.
\end{align*}
With the same argument as before, we observe that $u^k$ is converging in $W^{2, \, p}$ to  
$\lambda_0^{-1}\sqrt{1 - \lambda_0^2 \abs{x}^2}$, and this is the desired contradiction.
\end{proof}

\begin{remark}
We remark that in Lemma \ref{LemmaGraph} we do not claim that $\lambda_0$ has to be equal to $1$. The problem of finding the ``right'' $\lambda_0$ will be solved in the second part, when we will build the parametrization. The requirement of $\lambda_0$ being not $0$ is instead necessary, however as shown in \cite[Remark 1.9]{Daniel} a closed hypersurface $\Sigma$ must satisfy the lower bound
\begin{equation}\label{lambda0nonzero}
\norm{h}_{L^p(\Sigma)} \ge C(n, \, p, \, \Vol_n(\Sigma)).
\end{equation}
Since in our case $\Vol_n(\Sigma)=\Vol_n(\S^n)$, we avoid such degenerate cases. 
\end{remark}

Next we show how Lemma \ref{LemmaGraph} leads to a $C^1$-closeness to the sphere. 
\begin{cor}\label{CorClose}
For every $\epsilon>0$ there exists $0<\delta_0=\delta_0(n, \, p, \, c_0, \, \epsilon)$ such that, under the hypothesis of Lemma \ref{LemmaGraph}, 
\begin{equation}\label{C01Closenesss}
d_{\hd}\coup*{\Sigma, \, \S^n_{\abs{\lambda_0}^{-1}}}\leq \epsilon, \qquad \mbox{ and } \qquad \abs*{T_q \Sigma - \scal{q}^\perp} \le  \epsilon \quad \forall q\in \Sigma.
\end{equation}
\end{cor}

\begin{proof}
Let $\Sigma$, $0<\epsilon$ and $0<\delta_0$ be given as in Lemma \ref{LemmaGraph}. We choose a point $q \in \Sigma$, then rotate and translate $\Sigma$ so that $q=-\lambda_0^{-1} e_{n+1}$, $T_{q} \Sigma = \R^n \times \set{0}$. Hence the parametrization has the simpler form $\phi_q(x) = -\lambda_0^{-1} e_{n+1} + (x, \, u_q(x))$ and parametrizes a portion of the sphere $\S^n_{\abs{\lambda_0}^{-1}}$. For every $\tilde q \in \phi_q(\B^n_R)$, we consider $\tilde z \in \B^n_R$ such that $\tilde q=\phi_q(\tilde z)$. Then the following inequalities easily hold:
\[
\abs*{\tilde q - \coup*{\tilde z, \, \lambda_0^{-1}\sqrt{1 - \lambda_0^2 \abs{\tilde z}^2} }} \le \epsilon, \quad
\abs*{T_{\tilde q} \Sigma - \left \langle \coup*{\tilde z, \, \lambda_0^{-1} \sqrt{1 - \lambda_0^2 \abs*{\tilde z}^2 } } \right \rangle^\perp } \le \epsilon
\]
Now we apply Lemma \ref{GraphChartLem}: For every parametrisation $\phi_q$ we can find a geodesic ball $B^g_{\rho}(q)$ with $\rho=\rho(n, \, p, \, c_0)$ and satisfying condition \eqref{GeoBallinclusion}, namely
\[
\phi_q\coup*{ \B^n_{\frac{1}{1+L} \rho} } \subset B^g_\rho(q) \subset \phi_q\coup*{\B^n_\rho}.
\]
Via Lemma \ref{GraphChartLem} we can easily obtain a covering of $N$ geodesic balls $B^g(q_1), \, \dots B^g(q_N)$, where $N \le N_0(n, \, p, \, c_0)$ such that Lemma \ref{LemmaGraph} holds for $\phi_{q_1}, \, \dots \phi_{q_N}$. By a simple induction we easily find a constant $c(n, \,p, \, c_0)$ such that for every $q \in \Sigma$
\begin{equation}\label{C01Closeness}
\abs*{q - \abs{\lambda_0}^{-1} \frac{q}{\abs{q}}} \le c \epsilon, \quad \abs*{T_q \Sigma - \scal{q}^\perp} \le c \epsilon.
\end{equation}
This proves the $C^0$-closeness.
\end{proof}

We finish the proof of Proposition \ref{MainProp14Ch} by proving that $\Sigma$ can be parametrized on the sphere as in \eqref{RadialPar} and that $\lambda_0=1$. 
We define the projection
\[
p \fromto{\Sigma}{\S^n_{\abs{\lambda_0}^{-1} }},\ p(q):= \abs{\lambda_0}^{-1} \frac{q}{\abs{q}}.
\]
We start by proving that $p$ is a local diffeomorphism. The map is clearly differentiable, and a straight computation proves that the differential of $p$ at $q \in \Sigma$ is given by 
\begin{equation*}\label{ProjDiff}
\restr{dp}{q} \fromto{T_x \Sigma}{T_{p(q)} \S^n},\ \restr{dp}{q}[v]=  
\frac{\abs{\lambda_0}^{-1} }{|q|} \left( v - \left\langle v, \, \frac{q}{|q|}\right\rangle \frac{q}{|q|}  \right).
\end{equation*}
It is easy to see that $\ker \restr{dp}{q} =  \Set{tq | t \in \R}$. We want to prove that the differential $\restr{dp}{q}$ has maximal rank at every $q$, and this will prove that $p$ is a local diffeomorphism. In order to achieve this goal, we just need to show that for every $q \in \Sigma$, $q$ does not belong to $T_q \Sigma$, and this is exactly what \eqref{C01Closenesss} implies.
Hence $p$ is a local diffeomorphism. Let us show that it is a global one. 
Indeed, we consider the multiplicity function 
\[
\eta \fromto{\S^n}{\N},\ \eta(x):= \sum_{p(q)=x} 1.
\]
The function $\eta$ is well-defined, and since $p$ is a local diffeomorphism, it is continuous, thus  necessarily constant, say $\eta \equiv Q$. Then it is a $Q$-covering, but since $\S^n$ is simply connected, we must have $Q=1$, and hence $p$ is a diffeomorphism.
Let us define $\psi:=p^{-1}$. By construction, we find that $\psi(x)=e^{f(x)}x$ as in \eqref{RadialPar}, and \eqref{C01Closenesss} tells us that $f$ has small $C^1$-norm. This concludes the construction.

Finally we can conclude the proof of the proposition. Let us argue by compactness and consider a sequence of closed hypersurfaces $(\Sigma^k)_{k \in \N}$ satisfying \eqref{assumptionn}, and $\|\hdot^k\|_{L^p(\Sigma^k)} \to 0$, where $\hdot^k$ is the trace-free part of the second fundamental form of $\Sigma^k$. From Proposition \eqref{AnisOscLp3Ch} applied with $F\equiv 1$, we are able to find a sequence $(\lambda^k)_{k \in \N}$ such that, for every $k \in \N$,
\begin{equation*}\label{MeanOscillationLambdak}
\lVert h^k - \lambda^k \Id\rVert_{L^p(\Sigma^k)} \le C(n, \, p, \, c_0) \|\hdot^k\|_{L^p(\Sigma^k)} \to 0.
\end{equation*}
The sequence $(\lambda^k)_{k \in \N}$ is clearly bounded. Up to extraction of a subsequence we can assume $\lambda^k \to \lambda_0$ which has to be non-zero because of \eqref{lambda0nonzero}. 
We show the equality $\abs{\lambda_0}=1$ as an application of the area formula. Indeed, combining Lemma \ref{LemmaGraph} and Corollary \ref{CorClose} we obtain that, up to translations, the hypersurfaces $\Sigma^k$ are radially parametrized by a map
\[
\psi^k \fromto{\S^n_{ \abs{\lambda_0}^{-1} }}{\Sigma},\,  \psi(x) = e^{f^k(x)} x,\qquad \mbox{with} \qquad \norm{f}_{C^1} \le \epsilon.
\]
Then, we have:
\begin{align*}
1= \frac{\Vol_n(\Sigma)}{\Vol_n(\S^n)} &= \abs{\lambda_0}^{-1} \fint_{\S^n_{\abs{\lambda_0}^{-1} }} e^{nf^k} \sqrt{1 + \abs{\nabla f^k}^2} \, dV_\sigma = \abs{\lambda_0}^{-1}  \coup*{1 + O\coup*{\norm{f^k}_{C^1}}}.
\end{align*}
For $k \to \infty$ we obtain that $\abs{\lambda_0}=1$. The conclusion of the proposition follows by showing that $\lambda_0=1$, thus implying that every subsequence of $(\lambda^k)_{k \in \N}$ converges to $1$ and hence the whole sequence. Firstly, we notice that every $n \lambda^k$ must be close to the average of the mean curvature $\overline{H^k}$. Indeed, 
\[
\abs{\overline{H^k} - n \lambda^k} \le \fint_{\Sigma^k} \abs*{H^k - n \lambda^k} 
= \fint_{\Sigma^k} \abs*{ \scal{h^k - \lambda^k g^k, \, g^k}} \le C(n, \,p, \, c_0) \norm{\hdot^k}_{L^p(\Sigma^k)} \downarrow 0.
\]
Now we show that $\overline{H^k}$ must be close to $n$ and conclude. This follows by a simple estimate.
\begin{align*}
\overline{H^k}
&= \fint_{\S^n} ne^{(n-1)f^k} - \fint_{\S^n} \divv \coup*{\frac{\nabla f^k}{\sqrt{1 + \abs{\nabla f^k}^2}}} e^{(n-1)f^k} \sqrt{1 + \abs*{\nabla f^k}^2}  \\
&= n \fint_{\S^n} n e^{(n-1)f^k} + \fint_{\S^n} e^{(n-1)f^k} \coup*{\frac{(n-1)\abs{\nabla f^k}^2}{\sqrt{1 + \abs{\nabla f^k}^2}} + \frac{\nabla^2 f^k[\nabla f^k, \, \nabla f^k]}{1 + \abs{\nabla f^k}^2}   }
\end{align*}
Since every $\Sigma^k$ satisfies \eqref{assumptionn}, we easily obtain that the sequence $(f^k)_{k \in \N}$ is uniformly $W^{2, \, p}$-bounded, and thus
\[
\abs*{\overline{H^k} - n} \le C(n, \, p, \, c_0) \norm{f^k}_{C^1} \downarrow 0. 
\]
This shows that $\lambda_0$ must be equal to $1$, and all the computations we have made do not actually depend on the chosen subsequence.

\subsection*{Conclusion}

Insofar we have found a qualitative convergence. We will now make it quantitative. 
%

Indeed, with the very same proof  of \cite[Proposition 2.3]{GiofSec}, we are able to show the following result. Notice that the convexity assumption in \cite[Proposition 2.3]{GiofSec} is actually never used in its proof.
\begin{prop}\label{AlmostMainThmIsot}
Let $\Sigma$ be a closed hypersurface in $\erre^{n+1}$ satisfying \eqref{assumptionn}. Then for every $\epsilon>0$ there exists $0<\delta=\delta(\epsilon, \, n, \, p, \, c_0)$ wit the following property:
 if 
\[
\|\hdot\|_{L^p} \le \delta,
\]
then $\Sigma$ admits a radial parametrization and its radius $f$ satisfies the following inequality
 \begin{equation*}\label{MainEstimateEqsfera}
 \lVert f - \phi_f\rVert_{W^{2, \, p}(\S^n)} \le C \left( \|\hdot\|_{L^p(\Sigma)} + \sqrt{\epsilon} \|f\|_{W^{2, \, p}(\S^n)} \right),
 \end{equation*}
 where $C=C(n, \, p, \, c_0)$, and we have denoted
 \begin{equation*}\label{VecDefIsot}
\phi_f(z):= \langle z,v_f \rangle, \qquad \text{where} \qquad  v_f:= \frac{1}{n+1} \fint_{\S^n} z f(z) \, dV.
 \end{equation*}
\end{prop}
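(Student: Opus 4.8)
The plan is to reproduce, essentially verbatim, the linearization scheme of \cite[Proposition 2.3]{GiofSec}. The key observation is that its two structural inputs — the existence of a radial parametrization and a sharp oscillation estimate for the second fundamental form — are now available, with no convexity assumption, from Proposition \ref{OscProp} and Proposition \ref{para}. So I would first fix $\epsilon>0$ and pick $\delta=\delta(\epsilon,n,p,c_0)$ small enough that Proposition \ref{OscProp} and Proposition \ref{para} both apply: this produces the radial parametrization $\psi=e^{f}\Id$ of \eqref{RadialPar} together with $\lVert f\rVert_{C^0}\le\epsilon$ and $\lVert h-\Id\rVert_{L^p(\Sigma)}\le\epsilon$. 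As recalled just before the statement, the argument of \cite[Proposition 2.2]{GiofSec} — which uses only the Hausdorff closeness and the uniform local graph description, both at our disposal — then upgrades the first bound to $\lVert f\rVert_{C^1}\le\epsilon$, equivalently $\lVert\nu_\Sigma(x)-x\rVert\le C\epsilon$ for every $x\in\S^n$.

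The heart of the matter is the linearization about $f\equiv 0$. Writing the second fundamental form of the radial graph $z\mapsto e^{f(z)}z$ as a quasilinear second order expression $h=h(f)$ in $f$ (built from \eqref{hDef}) and Taylor expanding, one gets
\begin{equation*}
h(f)=\Id+\mathcal{L}[f]+\mathcal{E}(f),
\end{equation*}
where the linear part $\mathcal{L}$ has trace-free component $-\bigl(\nabla^2 f-\tfrac{1}{n}(\Delta f)\,\sigma\bigr)$ (derivatives taken for the round metric $\sigma$), and the remainder $\mathcal{E}(f)$ is at most quadratic in $(f,\nabla f,\nabla^2 f)$ and, $h(\cdot)$ being quasilinear, contains $\nabla^2 f$ at most linearly. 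Since $p>n$ gives $W^{2,p}(\S^n)\hookrightarrow C^1(\S^n)$, the bound $\lVert f\rVert_{C^1}\le\epsilon$ forces $\lVert\mathcal{E}(f)\rVert_{L^p(\S^n)}\le C(n,p)\,\epsilon\,\lVert f\rVert_{W^{2,p}(\S^n)}$. Taking trace-free parts (the gap between the $g$- and $\sigma$-traces being again of quadratic order, hence absorbed into $\mathcal{E}$) yields
\begin{equation*}
\Bigl\lVert\nabla^2 f-\tfrac{1}{n}(\Delta f)\,\sigma\Bigr\rVert_{L^p(\S^n)}\le\lVert\hdot\rVert_{L^p(\Sigma)}+C(n,p)\,\epsilon\,\lVert f\rVert_{W^{2,p}(\S^n)}.
\end{equation*}

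It remains to invoke the elliptic estimate for the trace-free Hessian operator $T[f]:=\nabla^2 f-\tfrac{1}{n}(\Delta f)\,\sigma$ on $\S^n$. Its principal symbol $\xi\mapsto\xi\otimes\xi-\tfrac{|\xi|^2}{n}\sigma$ is injective for $\xi\neq0$, so $T$ is overdetermined elliptic and standard Calder\'on--Zygmund theory (see \cite{GilTrud}) gives $\lVert f\rVert_{W^{2,p}(\S^n)}\le C\bigl(\lVert T[f]\rVert_{L^p(\S^n)}+\lVert f\rVert_{L^p(\S^n)}\bigr)$; equivalently, one may reduce to the fourth order operator $\divv\,\divv\,T=\tfrac{n-1}{n}\Delta(\Delta+n)$. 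A short computation using the Obata equation identifies $\ker T=\{a+\langle c,z\rangle:a\in\R,\ c\in\R^{n+1}\}$, i.e. the constants together with the kernel of the stability operator of Theorem \ref{CharactL}. Now $\phi_f$ (defined in \eqref{VecDefIsot}) is the component of $f$ along that kernel, while the perimeter normalization $\Vol_n(\Sigma)=\Vol_n(\S^n)$ forces $\bigl|\fint_{\S^n}f\,dV\bigr|=O(\lVert f\rVert_{C^1}^2)\le C\epsilon\,\lVert f\rVert_{W^{2,p}(\S^n)}$, so $f-\phi_f$ is $L^2$-orthogonal to $\ker T$ up to an error of size $\epsilon\lVert f\rVert_{W^{2,p}}$. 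The usual compactness/contradiction argument then removes the lower order term, yielding $\lVert f-\phi_f\rVert_{W^{2,p}(\S^n)}\le C(n,p)\lVert T[f]\rVert_{L^p(\S^n)}+C\epsilon\,\lVert f\rVert_{W^{2,p}(\S^n)}$, which combined with the previous display is \eqref{MainEstimateEqsfera} after relabelling constants. The one point requiring care — and the only place the removal of convexity is felt — is to verify that every ingredient of \cite[Proposition 2.3]{GiofSec} beyond Proposition \ref{OscProp} and Proposition \ref{para}, namely the linearization bookkeeping above and the elliptic estimate for $T$, is insensitive to convexity; this being so, the argument goes through unchanged.
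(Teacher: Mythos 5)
Your proposal follows the same route as the paper: the paper's proof of Proposition \ref{AlmostMainThmIsot} consists precisely in invoking Propositions \ref{OscProp} and \ref{para} to obtain the radial parametrization and the $C^1$-smallness of $f$, and then stating that the linearization scheme of \cite[Propositions 2.2--2.3]{GiofSec} applies verbatim. You have simply unpacked the content of that cited linearization argument (trace-free Hessian, Obata kernel, absorption of the mean via the volume constraint), which is consistent with what that reference does and with what the paper here leaves implicit.
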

In order to prove Theorem \ref{MainCor}, we just have to show that we can center $\Sigma$ so that $v_f=0$. This can be done by proving that we can center the hypersurface so that $b(\Sigma)=0$.  

Since $\Sigma$ is not convex, it can be a priori impossible to translate it and keep a radial parametrization. However, this is not a problem, and it is done by looking carefully at the proof of Lemma \ref{LemmaGraph}. In the proof of Lemma \ref{LemmaGraph} we chose a random point $q^k \in \Sigma^k$ and fix it to be $-\lambda_0 e_{n+1}$, then we perform our analysis. In order to center $\Sigma$ better, we just improve the proof in Lemma \ref{LemmaGraph} by choosing better $q^k$. 
Indeed, let again $(\Sigma^k)_{k \in \N}$ be a sequence of hypersurfaces satisfying \eqref{assumptionn} and $\lim_k \norm{\hdot^k}_{L^p(\Sigma^k)}=0$. We apply a translation so that $b(\Sigma^k)=0$ for every $k$, and choose $q^k$ so that 
\[
\abs{q^k}^2 = \max_{q \in \Sigma} \abs{q}^2. 
\]
It is easy to see that for such choice we have the equality $T_{q^k} \Sigma^k = \scal{q^k}^\perp$. The study we made above also grants us the limit: 
\[
\lim_k \norm{h^k - \Id}_{L^p(\Sigma^k)} = 0.
\]
We follow again the same argument of Lemma \ref{LemmaGraph}, choosing this time $q^k$ as first point for the covering argument, and obtain that the sequence $(\Sigma^k)_{k \in \N}$ is converging to a sphere $\S^n(c)$ with center $c$. Since the barycenter condition $b(\Sigma^k)=0$ passes to the limit, we also obtain that this sphere must satisfy $b(\S^n(c))=0$, therefore implying $c=0$. Now we repeat the same argument, and obtain the following improved version of Proposition \ref{MainProp14Ch}:
\begin{prop}
For every $0<\epsilon$ there exists $0<\delta_0=\delta_0(n, \, p, \, c_0, \, \epsilon)$ with the following property. 

If $\Sigma$ is a closed hypersurface satisfying \eqref{assumptionn} and $\norm{\hdot}_{L^p(\Sigma)} \le \delta_0$, then there exists a vector $c \in \R^{n+1}$ such that $b(\Sigma-c)=0$ and the radial parametrization
\[
\psi \fromto{\S^n}{\Sigma-c},\ \psi(x)=e^{f(x)} x
\]
is well defined. Moreover, $\norm{f}_{C^1(\Sigma)} \le \epsilon$.
\end{prop}
Via this proposition and the discussion above, we obtain Theorem \ref{MainCor}.

\section{The anisotropic case}\label{aniso}
We define the parametrization $\psi$ we will use in the proof of Theorem \ref{MainThm}. 
Let $\Sigma$ be a closed  hypersurface in $\R^{n+1}$ which is contained in the tubular neighborhood $B_\epsilon(\W)$ associated to $\W$, that is the set
\begin{equation*}\label{Tubular}
B_\epsilon(\W):= \set{ z \in \R^{n+1} \mid z= x+ \rho \nu_{\W}(x), \quad \forall  x \in \W, \, 0 \le \rho <\epsilon   }.
\end{equation*}
We refer the reader to \cite[Chapter 5]{Hirsch} for the proof of the following properties on the tubular neighborhoods. We recall that there exits $\epsilon>0$ sufficiently small such that, for every $r<\varepsilon$, $B_r(\W)$ is an open, bounded set with smooth boundary diffeomorphic to $\W$. 
We say that $\Sigma$ admits a \textit{radial parametrization} if there exists a diffeomorphism
\begin{equation}\label{RadialPar1}
\psi \daA{\W}{\Sigma},\ \psi(x)= x + u(x) \nu_{\W}(x),  \qquad \mbox{ for some } u \in C^\infty(\W).
\end{equation}
The function $u$ shall be called the \textit{radius} of $\Sigma$. 

In order to further exploit radial parametrizations, we need the following notation:
\begin{defi}\label{deff}
For every $c \in \R^{n+1}$, we define
 \begin{equation}\label{PhiDef}
 \phi_c \daA{\W}{\R},\quad \phi_c(y) := \langle c, \, \nu_{\W}(y)\rangle.
 \end{equation}
 We fix the vectors $\set{w_i}_{i=1}^{n+1}\subset \R^{n+1}$ such that  the associated functions  $\phi_{w_i}$ are an orthonormal frame  in $L^2$ for the vector space $\set{\phi_c}_{c \in \erre^{n+1}}$.  
 For every function $u \daA{\W}{\R}$, we define the vector $v_u \in \R^{n+1}$
\begin{equation*}\label{MeanMainC}
v_u  := \sum_{i=1}^{n+1} \langle u, \, \phi_{w_i}\rangle_{L^2} w_i.
\end{equation*}
\end{defi}

A useful tool about radial parametrizations is the following theorem (see \cite[Theorem 5.1]{GiofDeR}), which allows us to pass from a qualitative closedness to a quantitative one:
\begin{teo}\label{MainEstimateb}
Let $n \ge 2$, $p>n$ and let  $\Sigma$ be a closed, radially parametrized hypersurface in $\R^{n+1}$, satisfying \eqref{assumption} and having radius $u$ verifying
 \[
\lVert u\rVert_{C^0} \le \epsilon, \quad \lVert \nabla u \rVert_{C^0} \le C(n,F)\sqrt{\epsilon} .
\] Then there exists a constant $C=C(n, \, p, \, F)>0$ such that 
 \begin{equation*}\label{MainEstimateEq}
 \inf_{c \in \erre^{n+1}} \lVert u - \phi_c\rVert_{W^{2, \, p}(\W)} \le C \left( \lVert \Sdot_F(\Sigma)\rVert_{L^p(\Sigma)} + \sqrt{\epsilon} \lVert u\rVert_{W^{2, \, p}(\W)} \right).
 \end{equation*}
 \end{teo}
Although it is stated in \cite[Theorem 5.1]{GiofDeR} under the convexity hypothesis, the proof makes no other use of it rather than allowing Proposition \ref{AnisOscLp3Ch} (Theorem 3.1 in \cite{GiofDeR}). As we discussed in Remark \ref{noconv}, we can just replace the convexity assumption with the hypothesis \eqref{assumption} when $p>n$.
The use of the linear projections $\phi_c$ defined in \eqref{PhiDef} appears natural in view of \cite[Theorem 5.4]{GiofDeR}, which characterizes these functions as the only elements of the kernel of the anisotropic stability operator.

The cornerstone of the proof is the following proposition: 

\begin{prop}\label{QualCptness4Ch}
Let $n \in \N$, $n<p$, $0<\mathcal{A}, \, \mathcal{V}, \, R$ positive constants. Let $\mathfrak{F}$ be the set of all couples $\coup*{M, \, f}$ with the following properties:
\begin{itemize}
\item $M$ is an $n$-dimensional, compact manifold (without boundary).
\item $f \in W^{2, \, p}(M, \, \R^n)$ is an immersion with
$$
\norm{h(f)}_{L^p(M)} \le \mathcal{A},  \qquad \Vol_n(M) \le \mathcal{V}, \qquad
f(M) \subset \B^n_R.
$$
\end{itemize}
Then for every sequence $f_i \fromto{M_i}{\R^n}$ in $\mathfrak{F}$ there exist a subsequence $f_j$ , a mapping $f \fromto{M}{\R^n}$ in $\mathfrak{F}$, and a sequence of diffeomorphisms $\phi \fromto{M}{M_j}$, such that $f_j \circ \phi_j$ converges weakly in $W^{2, \, p}(M, \, \R^n)$ to $f$.
\end{prop}
Proposition \ref{QualCptness4Ch} is part of a series of compactness theorems on immersions started in \cite{Langer}, where the author proves the result for immersed surfaces in $\R^3$, and then continued in \cite{Delladio} for immersed hypersurfaces, and in \cite{Breuning} for the general case.
The proposition we want to prove is the following:

\begin{prop}\label{QualProp4Ch}
Let $\Sigma$ be a closed hypersurface in $\R^{n+1}$ satisfying \eqref{assumption}.
For every $0<\epsilon$ sufficiently small there exists a $0<\delta=\delta(\epsilon, \, n, \, p, \, c_0, \, \W)$ with the following property. If $\Sigma$ satisfies \eqref{DeltaAnisPinching4Ch}, then it admits an anisotropic radial parametrization as in \eqref{RadialPar1}. Moreover the radius $u$ satisfies the estimate
\begin{equation} \label{QualThmEst}
\norm{u}_{C^1} \le \epsilon.
\end{equation}
\end{prop}

We will see in the conclusion how the qualitative $C^1$-closeness will bring the desired quantitative one. 

\subsection*{Proof of Proposition \ref{QualProp4Ch}}
The proof of Proposition \ref{QualProp4Ch} uses strongly the compactness result of Proposition \ref{QualCptness4Ch}. Firstly, we prove the following two lemmas.

\begin{lemma}\label{AniSemiCont}
Let $\phi_k \fromto{M}{\R^{n+1}}$ be a sequence of immersions of a closed manifold. Assume $\phi_k$ satisfies \eqref{assumption}, and $\phi_k$ converges to an immersion $\phi_0$, weakly in $W^{2, \, p}$ . Then we have the inequality
\begin{equation*}\label{AniSemiContEq}
\norm*{S_F(\phi_0)}_{L^p(M)} \le \liminf_k \norm{S_F(\phi_k)}_{L^p(M)}.
\end{equation*}
\end{lemma}

\begin{lemma}\label{ConvWulff4Ch}
Let $(\Sigma^k)_{k \in \N}$ be a sequence of hypersurfaces satisfying \eqref{assumption}, and such that we have also ${\lim_k \norm{\Sdot_F^k}_{L^p(\Sigma^k)} = 0}$. Then there exist a non relabeled subsequence $(\Sigma^k)_{k \in \N}$ and parametrizations $\eta_k \fromto{\W}{\Sigma^k}$ such that $\eta_k$ converges weakly in $W^{2, \, p}$ to the identity map $\Id \fromto{\W}{\W}$.
\end{lemma}

Let us prove the lemmas and then show how they imply Proposition \ref{QualProp4Ch}.

\begin{proof}[Proof of Lemma \ref{AniSemiCont}]
We introduce the map
\begin{equation*}\label{WulffPar}
\Psi \fromto{\S^n}{\R^{n+1}},\ \Psi(x) := \grad_\sigma F(x) + F(x)x.
\end{equation*}
From \cite{Palmer} we know that the map $\Psi$ parametrizes the Wulff shape. It is immediate to show the equality
\begin{equation}\label{Diffeq4Ch}
S_F := A_F \circ d \nu = d\coup*{\Psi \circ \nu}.
\end{equation}
Indeed, the differential of $\Psi$ has the following form:
\[
d\Psi \cquad*{\frac{\partial}{\partial \theta^i}} = 
\underbrace{ \frac{\partial}{\partial \theta^i} \coup*{\grad_\sigma F} + \partial_i F \, \Id}_{ = D_i \coup*{D F} } + F \frac{\partial}{\partial \theta^i} = \coup*{A_F}^j_i \frac{\partial}{\partial \theta^j},
\]
where we have denoted by $D$ the Levi-Civita connection compatible with the canonical metric on the round sphere.
Taking the composition we obtain \eqref{Diffeq4Ch}. Let now $\coup*{\nu_k}_{k \in \N}$ be the sequence of outer normals associated to $\phi_k$, i.e. the sequence of mappings $\nu_k \fromto{M}{\S^n}$ such that 
\[
\scal{\nu_k(q), \, \restr{d \phi_k}{q} [v]} = 0, \, \forall \, v \in T_q M,
\]
and with orientation fixed so every $\nu_k$ is the outer normal for $\phi_k(M)=\Sigma^k$.
We claim that the sequence $\coup*{\nu_k}_{k \in \N}$ is bounded in $W^{1, \, p}(M, \, \R^{n+1})$. Firstly,  since $S_F^k = A_F \circ h^k$, we obtain
\begin{equation*}
\norm{h^k}_{L^p(\Sigma^k)} = \norm*{\coup*{A_F}^{-1} S_F^k }_{L^p(\Sigma^k)} \le c(F) \, c_0 = C(F, \, c_0),
\end{equation*}
and thus \eqref{assumption} implies \eqref{assumptionn}. Now we show how the $L^p$-boundedness of the second fundamental forms gives us the $L^p$-boundedness of the differential of the normals. The key is the following proposition, proved in \cite[Thm. 6.3]{Delladio}.

\begin{prop}\label{Delladio2ndff}
Let $2 \le p$, and $\psi \fromto{\B^n_R}{\R^{n+1}}$, $\psi(x)=\coup*{x, \, \xi(x)}$ be a graph parametrisation, with $\xi$ smooth function. Then the following estimate holds: 
\begin{equation}\label{Estimate2ndff}
\norm{\partial^2 \xi}_{L^p(\B^n_R)} \le \coup*{1 + \norm{\partial \xi}_0}^{\frac{3p-1}{p}} \norm{h}_{L^p}.
\end{equation}
\end{prop}
Estimate \eqref{Estimate2ndff} allows us to conclude. Indeed, since our hypersurfaces satisfy \eqref{assumption}, then by Remark \ref{noconv} we can apply Lemma \ref{GraphChartLem}. Plugging \eqref{Estimate2ndff} we can easily find a radius $R$ depending on $n$, $p$, $c_0$ such that the estimate
\[
\norm{d\nu_k}_{L^p(B^{g_k}_R(q))} \le C(n, \, p, \, c_0) \norm{h^k}_{L^p} \le C(n, \, p, \, c_0, \, F) \norm{S_F^k}_{L^p}
\]
holds for every point $q\in M$. Then we make this estimate global via Lemma \ref{GraphChartLem}, and obtain:
\[
\norm{d\nu_k}_{L^p} \le C(n, \, p, \, c_0, \, \W)\norm{S_F^k}_{L^p}.
\]
Therefore our sequence $\coup*{\nu_k}_{k \in \N}$ is bounded in $W^{1, \, p}$. Since $n<p$, every weak $W^{1, \, p}$-limit point $\nu_0$ is also a strong $C^{0, \, \alpha}$-limit point, and satisfies
\[
\abs*{\nu_0(q)}=1, \quad \scal{\nu_0(q), \, \restr{d\phi_0}{q}[v]} = 0, \qquad \mbox{for every $q\in M$ and $v \in T_q M$}.
\]
This shows that $\nu_0$ is the outer normal associated to the immersion $\phi_0$, and moreover $d\nu_k \debto d\nu_0$. In order to complete the proof, we simply consider equality \eqref{Diffeq4Ch}: since the map $\Psi$ is smooth, we obtain that $\Psi \circ \nu_k$ converges to $\Psi \circ \nu_0$ weakly in $W^{1, \, p}$, and the result follows from classical Sobolev theory.
\end{proof}

With the help of Lemma \ref{AniSemiCont} we prove Lemma \ref{ConvWulff4Ch}. 
\begin{proof}[Proof of Lemma \ref{ConvWulff4Ch}]
Let us argue by contradiction, and assume there exists a sequence of closed hypersurfaces $(\Sigma^k)_{k \in \N}$ satisfying \eqref{assumption}, $\lim_k \norm{\Sdot_F^k}_{L^p(\Sigma^k)}=0$, all enclosed in a ball $\B^{n+1}_R$, and such that the conclusion of the proposition does not hold.

We apply Proposition \ref{QualCptness4Ch}, and find a non-relabeled subsequence $(\Sigma^k)_{k \in \N}$, a closed manifold $M$, parametrizations $\phi_k \fromto{M}{\Sigma^k}$ converging weakly in $W^{2, \, p}$ to an immersion $\phi_0$. From \eqref{assumption}, Remark \ref{noconv} and Proposition \ref{AnisOscLp3Ch}, we find the existence of a bounded sequence $(\lambda^k)_{k \in \N}$ such that 
\[
\norm{S_F^k - \lambda^k \Id}_{L^p(\Sigma^k)} \le C\norm{\Sdot_F^k}_{(\Sigma^k)} \downarrow 0.
\]
Up to subsequences, we assume $\lambda^k = \lambda_0$ for every $k\in \N$. Then $\lambda_0$ must be different from $0$ because of the estimate
\[
\norm{S_F}_{L^p(\Sigma)} \ge C(n, \, p, \, F) \norm{h}_{L^p(\Sigma)} \ge C(n, \, p, \, F, \, \Vol_n(\Sigma)) = C(n, \, p, \, F).
\]
Since $S_F= d\coup*{\Psi \circ \nu_h}$, we apply Lemma \ref{AniSemiCont} to the sequence $\Psi \circ \nu_h - \lambda_0 x$, and obtain that the limit immersion $\phi_0$ satisfies the equality 
\[
S_F = \lambda_0 \Id
\]
weakly. From it we easily infer
\begin{equation}\label{fastweak}
h(\phi_0) = \lambda_0 \coup*{A_F}^{-1}.
\end{equation}
Now we take the trace in \eqref{fastweak}, and obtain that in every graph parametrisation around every point $q\in \Sigma$, the function $u_q$ that parametrizes the immersion is Lipschitz and satisfies an equality of the following type:
\[
\divv \coup*{ \frac{\partial u_q}{\sqrt{1 + \abs{\partial u_q}^2}} } = f(u_q, \, \partial u_q),
\]
for a certain smooth function $f$. This tells us that the function $u_q$ is smooth. Since then \eqref{fastweak} holds classically, $u_q$ is also convex, and we obtain that $\phi_0$ is a smooth immersion and $\Sigma_0 := \phi_0(M)$ is a smooth, convex hypersurface of $\R^{n+1}$. Since $\Sigma_0$ is diffeomorphic to a round sphere, the same argument used to build the parametrization in the proof of \ref{MainProp14Ch} tells us that $\phi_0$ is actually an embedding. From \cite{Palmer} and the volume condition in \eqref{assumption}, we conclude that $\lambda_0=1$ and $\phi_0(M)$ must be a Wulff shape $\W+c$ for some vector $c \in \R^{n+1}$. Up to translation, we assume $c=0$. Now we easily define $\eta^k \fromto{\W}{}\Sigma^k$, $\eta^k=\phi_k \circ \phi_0^{-1}$ and obtain that $\eta^k$ converges to the identity map $\Id \fromto{\W}{\W}$ weakly in $W^{2, \, p}$.
\end{proof}
The results obtained give us a priori only a qualitative $C^1$-closeness. 
Insofar we have proved the following result:
\begin{cor}\label{CorollaryWC1}
Let $\Sigma \subset \R^{n+1}$ be a closed hypersurface satisfying conditions \eqref{assumption}.
Then for every $\epsilon>0$ there exists $\delta_0(n, \, p, \, \W, \, c_0)>0$ with the following property. If  $\Sigma$ satisfies \eqref{DeltaAnisPinching4Ch}, then there exists a map $\eta \fromto{\W}{\Sigma}$ such that 
\begin{equation}\label{etaC1estimate}
\norm{\eta - \Id}_{C^{1, \, \alpha}(\W)} \le \epsilon.
\end{equation}
\end{cor}

We show how \eqref{etaC1estimate} yields the desired graph parametrization. Let $\Sigma$ be a closed hypersurface that satisfies the assumptions of Corollary \ref{CorollaryWC1}. Let $B_\epsilon(\W)$ be the tubular neighbourhood associated to $\W$. We denote by $P$ the natural projection over the Wulff shape, that is 
\[
P \fromto{B_\epsilon(\W)}{\W}, \quad P \colon q=x+\rho \nu_\W(x) \longrightarrow x.
\]
The map $P$ is Lipschitz and smooth. Moreover, it can be proved that for every $q \in B_\epsilon \W$, the differential $\restr{dP}{q} \fromto{\R^{n+1}}{T_{P(q)} \W }$ is surjective and satisfies the property 
\begin{equation}\label{KernelP}
\restr{dP}{q}[z]=0 \, \Leftrightarrow \, z= \lambda \nu_{\W} \coup*{P(q)},\ \lambda \in \R.
\end{equation} 
See \cite[Ch. 5]{Hirsch} for the details. Since $\Sigma$ satisfies Corollary \ref{CorollaryWC1} and hence estimate \eqref{etaC1estimate}, then $\Sigma \subset B_\epsilon(\W)$ and we can set $p:= \restr{P}{\Sigma}$. Therefore, $p$ is a smooth, Lipschitz map from $\Sigma$ to $\W$ and satisfies 
\begin{equation}\label{pC0}
\sup_{q \in \Sigma} \, \abs{q - p(q)} \le \epsilon.
\end{equation}
We claim that $p$ also satisfies:
\begin{equation}\label{closenormals}
\sup_{q \in \Sigma} \, \abs{\nu_\Sigma(q) - \nu_\W (p(q))}\le C(n, \, \W) \epsilon.
\end{equation}  
If the claim is true, then $p$ is a local diffeomorphism: indeed, since $\nu_\W(p(q)) \notin T_q \Sigma$ for every $q \in \Sigma$, by \eqref{KernelP} $\restr{dp}{q}$ has maximal rank at every point $q \in \Sigma$. Hence $p$ is a local diffeomorphism, and since the Wulff shape is diffeomorphic to the sphere, the same argument made in the isotropic case proves it is a global diffeomorphism. Then the inverse $\psi(x)= x + u(x) \nu_\W(x)$ is the desired radial parametrization and from inequalities \eqref{pC0} and \eqref{closenormals} we obtain that $u$ is small in the $C^1$-norm. 

Now we prove the claim. Let $q \in \Sigma$ be fixed, and let $z \in \W$ be given so that $q=\eta(z)$. From \eqref{pC0} we know that 
\[
\abs{q - p(q)} \le \epsilon,
\]
and from \eqref{etaC1estimate} we know that 
\begin{equation}\label{lastcloseness}
\abs{q - z}\leq \epsilon,\qquad \abs{\nu_\Sigma(q) - \nu_\W(z)} \le \epsilon.
\end{equation}
Patching the inequalities together, we get 
\[
\abs{p(q) - z} \le 2\epsilon.
\]
Since the Wulff shape is convex, necessarily $z$ must belong to a graph parametrisation $\phi_{p(q)} \fromto{\B^n_R}{\W}$ centered in $p(q)$, provided that $\epsilon>0$ is sufficiently small. By convexity, we easily notice that 
\[
\abs{\nu_\W(p(q)) - \nu_\W(z)} \le c(n, \, \W) \epsilon.
\]
Patching this inequality with \eqref{lastcloseness} we obtain the claim, and we conclude the proof of Proposition \ref{QualProp4Ch}.

\subsection*{Conclusion}
Recalling Definition \ref{deff} and combining Proposition \ref{QualProp4Ch}, Theorem \ref{MainEstimateb} and \cite[Proposition 7.1]{GiofDeR}, we obtain the following result.
\begin{prop}\label{AlmostMainThm}
Under the hypothesis of Proposition \ref{QualProp4Ch}, we have the additional estimate:
 \begin{equation}\label{MainEstimateEq4Ch}
 \norm{u - \phi_{v_u}}_{W^{2, \, p}(\W)} \le C \coup*{ \norm{\Sdot_F(\Sigma)}_{L^p(\Sigma)} + \epsilon \norm{u}_{W^{2, \, p}(\W)}}.
 \end{equation}
 where $C=C(n, \, p, \, \W)$.
\end{prop}

We end the section by getting rid of the function $\phi_{v_u}$ in estimate \eqref{MainEstimateEq4Ch}, that is, proving the following:

\begin{prop}\label{CenterProp4Ch}
Let $\Sigma$ be a closed hypersurface in $\R^{n+1}$ satisfying \eqref{assumption} and \eqref{DeltaAnisPinching4Ch}, so that the estimates of Propositions \ref{QualProp4Ch} and \ref{AlmostMainThm} hold for a radial anisotropic parametrization $\psi$.
There exist $\epsilon_0>0$, $C_0>0$ depending only on $\W$ with the following property. If \eqref{QualThmEst} holds with $\epsilon \le \epsilon_0$, then there exists $c=c(\Sigma) \in \R^{n+1}$ such that  $\Sigma - c$ still admits a radial parametrization 
\[
\psi_c \fromto{\W}{\Sigma-c},\quad \psi_c(x):=x+u_c(x)\nu_{\W}(x),
\]
and $u_c$ satisfies:
\[
\begin{cases}
\|u_c\|_{C^1} \le C_0\epsilon, \\
\langle u_c, \, \phi_w\rangle_{L^2} = 0 \quad \mbox{ for every } \phi_w \text{ defined as in \eqref{PhiDef}}.
\end{cases}
\]
\end{prop}

\begin{proof}
We divide the proof into three main steps. 
\paragraph*{Step 1} 
\textit{For any positive constant $C_1$ there exist positive numbers $\epsilon$, $C_2$ depending only on $\W$, $C_1$ with the following property. 
For every $c \in \B_{C_1 \epsilon}^{n+1}$, the hypersurface $\Sigma_c:= \Sigma - c$ is still a graph over $\W$, and its radius $u_c$ satisfies}
\[
\|u_c\|_{C^1(\W)} \le C_2 \epsilon.
\]
We consider $\epsilon$ so small that $\Sigma_c$ is still in the $2\epsilon$-tubular neighborhood of $\W$.  Again, we argue by proving that the projection map 
\[
p_c \fromto{\Sigma_c}{\W},\quad p_c \colon q=x+u_c\nu_{\W}(x) \longmapsto x
\]
is a diffeomorphism. 
Following the same strategy of the proof of Proposition \ref{QualProp4Ch}, we just need to show that $\nu_{\W}(p_c(q)) \notin T_q \Sigma_c$ for every $q \in \Sigma_c$. Let then $q \in \Sigma_c$ be given. By the very definition of $\Sigma_c$, we have that $\tilde{q}:= q -c  \in \Sigma$. Moreover, since $\Sigma$ is a graph over $\W$ with radius $u$, there exists $x \in \W$ such that $\tilde{q} =x+u(x)\nu_{\W}(x)$. By the computation made in \cite[App. B]{GiofDeR}, we deduce
\begin{equation}\label{stima1}
\left| \nu_\Sigma(\tilde{q}) - \nu_{\W}(x)\right| \le C(\W)\epsilon.
\end{equation}
Since $\Sigma_c=\Sigma+c$, we know that $\nu_\Sigma(\tilde{q} ) = \nu_{\Sigma_c}(\tilde{q} + c) = \nu_{\Sigma_c}(q)$. On the other hand,
\begin{equation}\label{stima2}
\left|  \nu_{\W}(p_c(q)) - \nu_{\W}(x) \right| \le \epsilon.
\end{equation}
Combining \eqref{stima1} with \eqref{stima2}, we deduce that 
\begin{align*}
\left| \nu_{\Sigma_c}(q) - \nu_{\W}(p_c(q)) \right| = \left| \nu_\Sigma(\tilde{q}) - \nu_{\W}(p_c(q)) \right| \le  \left| \nu_\Sigma(\tilde{q}) - \nu_{\W}(x)\right|+ \left| \nu_{\W}(x) - \nu_{\W}(p_c(q)) \right| \overset{\eqref{stima1},\eqref{stima2}}{\leq} C\epsilon.
\end{align*}

This shows that for $\epsilon$ sufficiently small, $\nu_{\W}(p_c(q)) \notin T_q\Sigma_c$, and thus we can conclude as in the proof of Theorem \ref{QualProp4Ch}. 

\paragraph*{Step 2} 
\textit{We consider the map
\begin{equation*}\label{CenterPhiDef}
\Phi \fromto{\B_{C_1 \epsilon}^{n+1}}{\R^{n+1}},\quad \Phi(c):= \sum_{i=1}^n \langle u_c, \, \phi_{w_i}\rangle_{L^2} w_i
\end{equation*}
where $\phi_{w_i}$, $w_i$ are defined in Definition \eqref{deff}.
Then there exists a constant $C_3$ depending on $C_1$ such that the following estimate holds:}
\begin{equation}\label{ClosePhiEst}
\left| \Phi(c) - \Phi(0) - c\right| \le C_3 \epsilon^2.
\end{equation}
Indeed, for every $c$ such that $|c| < C_1 \, \epsilon$ we find 
\[
d_{\hd}(\Sigma -c, \, \W) \le d_{\hd}(\Sigma -c, \, \Sigma) + d_{\hd}(\Sigma, \, \W) \le (C_1+1) \epsilon.
\]
Arguing as in the Step 1, it is easy to see that also the function $u_c$ satisfies the estimates 
\begin{equation}
\|u_c\|_{C^1} \le C(n, \, \W) \epsilon, \label{14Ch}
\end{equation}
 We start the linearisation with the following simple consideration: for every $z \in \W$ there exists $x_c=x_c(z)\in \W$ so that
\[
\psi_c(z) = \psi(x_c(z)) - c.
\]
We expand this equality and find
\begin{equation}\label{Equalityzxc4Ch}
 z + u_c (z) \, \nu_{\W}(z) = x_c(z) + u(x_c(z)) \nu_{\W}(x_c(z)) - c .
\end{equation}
Using the $C^0$-smallness of $u$ and $u_c$, we can easily see that $x_c=x_c(z)$ satisfies the relation
\begin{equation*}\label{xcApp4Ch}
\abs*{x_c(z) - z} \le C\coup*{n, \, \W}\epsilon .
\end{equation*}
This approximation, combined with \eqref{14Ch}, gives an estimate of $u$ close to $z$: 
\begin{equation}\label{Estimate324Ch}
\abs*{u(x_c(z)) -  u(z)} \le C(n, \, \W)\epsilon^{2}.
\end{equation}
We evaluate $F^*$ in the point in \eqref{Equalityzxc4Ch}:
\begin{align*}
\underbrace{F^*( z + u_c (z) \, \nu_{\W}(z))}_{= 1 +  u_c(z) \restr{ dF^*}{z}[\nu_{\W}(z)] + \mR } = 
\underbrace{F^*( x_c(z) + u (x_c(z)) \, \nu_{\W}(x_c(z)) -c)}_{= 1 +  u(x_c(z)) \restr{ dF^*}{x_c(z)}[\nu_{\W}(x_c(z))] - \restr{ dF^*}{x_c(z)}[c]  + \mR },
\end{align*}
where 
\[
\abs*{\mR} \le C(n, \, \W) \epsilon^2.
\]
Plugging in the previous equality the gauge property \eqref{Robin}, we obtain
\[
 \abs*{u_c(z)\langle \nu_{\W}(z), \, \nu_{\W}(z)\rangle - u(x_c(z))\langle \nu_{\W}(x_c), \, \nu_{\W}(x_c)\rangle + \langle c, \, \nu_{\W}(x_c)\rangle } \le C(n, \, \W) \epsilon^{2},
 \]
which by \eqref{Estimate324Ch} reads
\begin{equation}\label{GoodEstimate4Ch}
\abs{u_c(z) - u(z) + \underbrace{\langle c, \, \nu_{\W}(z)\rangle}_{=\phi_c(z)}}  
\le C(n, \, \W) \epsilon^{2}.
\end{equation} 
Integrating over $\W$ and using \eqref{GoodEstimate4Ch}, we conclude the proof of Step 2.

\paragraph*{Step 3} \textit{Conclusion}. 
In order to obtain the thesis, we will prove the following claim (the proof of Claim is postponed right after this proof):
{\em
\begin{itemize}
\item[\textit{\bf Claim}]Let $G$ be a continuous map $ G \daA{\B^{n+1}_{1}}{\erre^{n+1}}$ which satisfies the estimate 
\begin{equation}\label{PhiLinEst3Ch}
\abs{G(x) - a - x} \le \epsilon \quad \forall x \in \B^{n+1}_{1} \qquad  \mbox{ with } \abs{a} < \frac{1}{10}.
\end{equation}
Then $G$ must have $0$ in its image if $\epsilon$ is sufficiently small.
\end{itemize}}
%
%

 This claim gives us the thesis since we can always reduce to this case by choosing a $C_1$ big enough (depending only on $n$ and $\W$) and via a proper rescaling. Indeed, we can define 
 \[
\tilde{\Phi} \fromto{\B_1^{n+1}}{\R^{n+1}}, \tilde\Phi(c):= \frac{\Phi(  C_1 \epsilon c)}{ C_1 \epsilon}
\]
and observe that
\[
\abs*{\tilde{\Phi}(c) - \frac{\Phi(0)}{C_1 \epsilon} - c} = \frac{1}{C_1 \epsilon} \abs*{\Phi(C_1\epsilon c) - \Phi(0) - C_1\epsilon c} \overset{\eqref{ClosePhiEst}}{\le} \frac{C(n, \, \W )\epsilon }{C_1} .
\]
Moreover, 
\[
\frac{\abs{\Phi(0)}}{C_1 \epsilon} \epsilon \le \frac{C(n, \, \W)}{C_1} \le \frac{1}{10}
\]
if we choose the proper $C_1(n, \, \W)$. Therefore, by the claim, we can find $\tilde{c} \in \B^{n+1}_1$ such that $\tilde{\Phi}(\tilde{c})=0$, i.e. $\Phi(C_1 \epsilon \tilde{c})=0$, and we have finished.
\end{proof}

\begin{proof}[Proof of Claim]
We argue by contradiction, and assume that $0$ is not in the image of $G$. Therefore, the rescaled map 
\[
\xi := \frac{G}{\abs{G}} \daA{\B^{n+1}_1}{\S^n} 
\]
is well defined. Now, we know that $G$ satisfies \eqref{PhiLinEst3Ch}. Thus, we obtain:
\begin{align}
\abs{G(x)}^2 = \abs{a + x}^2 + \abs{G(x) - a - x}^2 + 2\scal{a+x, \, G(x) - a - x}= 1 + \abs{a}^2 + 2\scal{a, \, x} + \mR, \label{PreliminaryRescal}
\end{align}
where $\abs{\mR} \le C(n, \, \W)\epsilon$. From \eqref{PreliminaryRescal} we have:
\begin{equation}\label{PreliminaryRescal2}
\frac{79}{100} - C(n, \, \W)\epsilon \le \abs{G(x)}^2 \le \frac{121}{100} + C(n, \, \W)\epsilon.
\end{equation}
We use inequalities \eqref{PhiLinEst3Ch} \eqref{PreliminaryRescal} and \eqref{PreliminaryRescal2} to infer the following estimate:
\begin{align*}
\abs{\xi(x) - x}
&= \abs*{\frac{G(x)}{\abs{G(x)}} - x} = \frac{1}{\abs{G(x)}}\abs{G(x) - \abs{G(x)} x } \\ 
&= \frac{1}{\abs{G(x)}}\abs{G(x) - a - x + a + x(1- \abs{G(x)}) } \le \frac{1}{\abs{G(x)}} \coup*{ \abs{a} + C\epsilon + \abs{1 - \abs{G(x)}} } \\
&\le \frac{10}{\sqrt{79 - C\epsilon}} \coup*{ \frac{1}{10}+ \frac{\sqrt{21}}{10} + C\sqrt{\epsilon} } \le \frac{1 + \sqrt{21}}{\sqrt{79 - C\epsilon}}\coup*{1 + C \sqrt{\epsilon}} \le \frac{\sqrt{2}}{2} + C\sqrt{\epsilon}
\end{align*}
where the constant $C$ depends only on $n$ and $\W$. Therefore, for every $0<\epsilon<1$ sufficiently small, we obtain 
\begin{equation}\label{HomotopyIdentity3Ch}
\abs{\xi(x) - x } < 2 \mbox{ for every } x \in \S^n.
\end{equation}
Therefore the map $ \overline{\xi}:= \restr{\xi}{\esse^n}$ defined as the restriction of $\xi$ to the sphere is well defined.
The thesis follows by a simple application of topological degree theory, which can be found in \cite[Ch.5]{Hirsch}:  since $\overline{\xi}$ is the restriction of a map on the sphere, it must have degree equal to $0$, but \eqref{HomotopyIdentity3Ch} easily implies that $\overline{\xi}$ is homotopic to the identity, and therefore it must have degree equal to $1$, giving the desired contradiction.
\end{proof}
 
\begin{proof}[Proof of Theorem \ref{MainThm}]
We observe that Proposition \ref{CenterProp4Ch} implies that $u_c$ satisfies an improved version of \eqref{MainEstimateEq4Ch}, that is:
$$
 \norm{u_c}_{W^{2, \, p}(\W)} \le C \coup*{ \norm{\Sdot_F(\Sigma)}_{L^p(\Sigma)} + \epsilon \norm{u_c}_{W^{2, \, p}(\W)}}.
$$
This in particular concludes the proof of Theorem \ref{MainThm}.
\end{proof}

\section{Quasi Einstein Hypersurfaces}\label{Ricci}
In this section we focus on the proof of Theorem \ref{Thm1} and Theorem \ref{Thm2}. We first recall the geometric quantities involved:
\subsection*{Geometric quantities}
We fix the sign convention for the main geometric quantities we are going to study in this section. 
We define 
\[
\text{R}(X, \, Y)Z := \nabla^2_{Y, \, X} Z - \nabla^2_{X, \, Y} Z
\]
The Riemann curvature is the $4$-covariant tensor given by lowering one index in the previous expression.
\begin{equation*}\label{RiemannSign}
\Riem(X, \, Y, \, Z, \, W) = \left \langle\text{R}(X, \, Y)Z, \, W\right \rangle
\end{equation*}
The Ricci curvature is the $2$-covariant tensor given by taking the $(1, \, 3)$-trace of the Riemann curvature:
\begin{equation*}\label{RicciSign}
\Ric_{ij} := g^{pq} \Riem_{ipjq}.
\end{equation*}
Finally, the scalar curvature is given by taking the trace of the Ricci curvature:
\begin{equation*}\label{CurvatureSign}
R= g^{ij} \Ric_{ij}.
\end{equation*}
We recall the following well known corollary of the differential Bianchi identity (see \cite[p. $184$]{GHL}), which relates the derivatives of the Ricci curvature with the derivatives of the scalar curvature.
\begin{lemma}\label{DiffBian}
Let $(M, \, g)$ be an $n$-dimensional Riemannian manifold, with $n\ge 3$. The following equation holds:
\begin{equation*}\label{DiffBianEq}
\divv \Ric = \frac{1}{2} \nabla R
\end{equation*}
\end{lemma}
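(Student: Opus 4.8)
The plan is to derive \eqref{DiffBianEq} purely formally from the second (differential) Bianchi identity by contracting the metric twice and keeping track of the symmetries of the Riemann tensor and of the sign conventions fixed in \eqref{RiemannSign}--\eqref{CurvatureSign}. First I would recall that for any Levi-Civita connection the second Bianchi identity holds in the cyclic form
\[
\nabla_m \Riem_{ijkl} + \nabla_k \Riem_{ijlm} + \nabla_l \Riem_{ijmk} = 0 ,
\]
the cyclic sum being taken over the last three indices. Contracting this with $g^{jk}$, using the antisymmetry $\Riem_{ijlm}=-\Riem_{ijml}$ on the second term and the defining relation $\Ric_{il}=g^{jk}\Riem_{ijkl}$ from \eqref{RicciSign} on the first and third, one obtains the once-contracted Bianchi identity, which expresses the divergence $\nabla^{j}\Riem_{ijlm}$ as the antisymmetrized covariant derivative $\nabla_m \Ric_{il} - \nabla_l \Ric_{im}$ (up to the overall sign dictated by \eqref{RiemannSign}, which is the only point that has to be checked against the conventions in use).

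Next I would contract this once more, say with $g^{il}$. On the left-hand side, $g^{il}\nabla^{j}\Riem_{ijlm}=\nabla^{j}\Ric_{jm}=(\divv\Ric)_m$, again by \eqref{RicciSign}. On the right-hand side, $g^{il}\nabla_m\Ric_{il}=\nabla_m R$ by \eqref{CurvatureSign}, while $g^{il}\nabla_l\Ric_{im}=\nabla^{i}\Ric_{im}=(\divv\Ric)_m$. Hence
\[
(\divv\Ric)_m = \nabla_m R - (\divv\Ric)_m ,
\]
and solving for $\divv\Ric$ gives $2\,\divv\Ric = \nabla R$, which is exactly \eqref{DiffBianEq}.

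The computation is entirely routine; the only genuine obstacle is the bookkeeping, i.e.\ making sure each contraction is carried out on the index slots compatible with the conventions \eqref{RiemannSign}--\eqref{CurvatureSign}, since performing a contraction on the ``wrong'' slot of $\Riem$ would produce a spurious sign and spoil the factor $1/2$. An alternative, if one prefers to avoid component computations altogether, is simply to invoke \cite[p.~184]{GHL}, where this corollary of the differential Bianchi identity is stated in the same generality; the argument above is just the standard derivation of that statement adapted to the present sign conventions.
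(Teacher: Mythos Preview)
Your derivation is the standard two-fold contraction of the second Bianchi identity and is correct; in fact with the paper's conventions \eqref{RiemannSign}--\eqref{CurvatureSign} the signs work out exactly as you wrote, yielding $2\,\divv\Ric=\nabla R$. The paper itself does not supply a proof of this lemma at all---it simply records the statement and refers to \cite[p.~184]{GHL}---so your proposal is strictly more than what the paper does, and your closing remark (that one may alternatively just invoke \cite{GHL}) is precisely the route the authors take.
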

 Moreover, we recall the Gauss equation for hypersurfaces in a Euclidean space (see \cite[Thm 5.5]{GHL}): 
 \begin{teo}\label{GaussThm}
 Let $\Sigma$ be a hypersurface in $\R^{n+1}$. Then the following equation holds:
\begin{equation}\label{GaussEq}
\Riem_{ijkl} = \frac{1}{2} \coup*{h \nomizu h}_{ijkl} = h_{ik} \, h_{jl} - h_{il} \, h_{jk} .
\end{equation}
\end{teo} 
Contracting the indices in \eqref{GaussEq} we obtain 
\begin{equation*}\label{GaussRicci}
\Ric_{ij} = H h_{ij} - h_i^k h_{kj} .
\end{equation*}

%

We now proceed to give the idea of the proof of Theorem \ref{Thm1} and Theorem \ref{Thm2}.
The strategy we would like to use is basically the same as the one used for Theorem \ref{MainCor}, that is:

Let us consider a sequence of hypersurfaces $\coup*{\Sigma_k}_{k \in \N}$ satisfying either \eqref{PinchingStrict} or \eqref{Pinchingc0}, and 
\[
\lim_k \norm{\tRic^k}_{L^p(\Sigma_k)}=0.
\] Firstly, we estimate the diameter of $\Sigma_k$ and consider a (not relabeled) subsequence $\Sigma_k$ that converges in the Hausdorff distance to a subset $\Sigma_0 \subset \R^{n+1}$. \textit{If} $\Sigma_0$ were a smooth manifold, and \textit{if} the decay of the traceless Ricci tensor passed to the limit, than $\Sigma$ would be a smooth, closed Einstein manifold in $\R^{n+1}$, which is necessarily the round sphere. Then, performing a fine analysis of the $\phi_q$, we would obtain than every graph parametrisation of $\Sigma_k$ must converge to the graph parametrisation of the sphere, and thus we could build the same proof made for Theorem \ref{MainCor}. 

The problem here are the two ifs above, which need to be motivated. First of all, the set $\Sigma_0$ we will find is a priori only a compact subset in $\R^{n+1}$; moreover, the Ricci operator is not elliptic when viewed as a differential operator acting on the function which describes $\Sigma$ as a graph parametrisation. Also if we consider the Gauss equation \eqref{GaussEq} and consider the associated polynomial equation for the eigenvalues of $h$, then the equality
\[
\Ric = (n-1)\lambda 
\]
implies $h = \lambda \Id$ only when $\lambda>0$. Thus, we also need to prove the positivity of $\lambda$ in order to achieve our result. 

We split the proof of the qualitative closeness into two main propositions.

\begin{prop}\label{Harmonicweak4Ch}
Let $\phi_k \fromto{\B^n_R}{\R^{n+1}}$ be a sequence of graph parametrizations, and let then $\Graph(u_k, \, \B^n_R)$ be their image. Assume that every $u_k$ satisfies the following properties:
\begin{itemize}
\item $u_k(0)=0$, $\partial u_k(0)=0$;
\item $\norm{u_k}_{W^{2, \, p}} \le c_0$;
\item $u_k \debto u_0$ weakly in $W^{2, \, p}$;
\item the sequence of hypersurfaces $\coup*{\Graph(u_k, \, B^n_R)}_{k \in \N}$ satisfies
\[
\lim_k \norm{\Ric^k - (n-1)\lambda_0 g^k}_{L^p} = 0.
\] 
\end{itemize}   
Then there exists a radius $0<\rho_0=\rho_0(n, \, p, \, c_0)$ such that the function $u_0$ is smooth (actually analytic) in $\B^n_{\rho_0}$, and the hypersurface $\Graph(u_0, \, \B^n_{\rho_0})$ is Einstein.
\end{prop}

\begin{prop}\label{MeanRicciPositive4Ch}
For every $\epsilon>0$ there exists $0<\delta=\delta(n, \, p, \, c_0, \, \epsilon)$ with the following property. 

Let $\Sigma$ be a closed hypersurface in $\R^{n+1}$ satisfying either \eqref{PinchingStrict} or \eqref{Pinchingc0}. If $\norm{\Ric - (n-1) \lambda_0 g}_{L^p} \le \delta$, then $\lambda_0 > 0$, and for every $q \in \Sigma$, the graph parametrisation $\phi_q$ satisfies:
\begin{equation*}
\norm*{u_q(\cdot) - \mu_0^{-1} \coup*{ \sqrt{1 - \mu_0^2 \abs{ \, \cdot \, }^2} - 1 }}_{C^1} \le \epsilon,
\end{equation*}
where $\mu_0 = \sqrt{\lambda_0}$.
\end{prop}

Combining these two propositions, we obtain the $C^1$-closeness, and then we show how to conclude, proving an improved oscillation result:
\begin{prop}\label{RicciOscillationLp2Ch}
Let $n\ge 2$ be given, ad let $\Sigma$ be a closed hypersurface in $\R^{n+1}$  such that $\Vol_n(\Sigma)=\Vol_n(\S^n)$. Assume $\Sigma$ satisfies one of two following hypothesis:
\begin{itemize}
\item[$a)$] $\Sigma$ is convex, and $ \norm{h}_{L^p(\Sigma)} \le c_0$ for some $1<p<\infty$.
\item[$b)$] $\fint_{\Sigma }\Scal=:\overline{\Scal}>0$ and $\norm{h}_{L^p(\Sigma)} \le c_0$ for some $n<p<\infty$. 
\end{itemize}
Then the following inequality holds.
\begin{equation}\label{RicciOscillationLpEq2Ch}
\norm*{\Riem - \frac{\overline{\Scal}}{2n(n-1)} g \nomizu g }_{L^p(\Sigma)} \le C(n, \, p, \, c_0) \norm{\tRic}_{L^p(\Sigma)}.
\end{equation}
\end{prop}

\subsection{Proof of the $C^1$-closeness}
We start by proving the first proposition.

\begin{proof}[Proof of Proposition \ref{Harmonicweak4Ch}]
The proof uses the concept of \textit{harmonic coordinates}. We recall the definition: given a manifold $(M, \, g)$ and an open set $U \subset M $ a mapping $y \daA{U}{\erre^{n+1}}$ is said to be a \textit{harmonic chart} if it is a diffeomorphism and if it satisfies the equation 
\[
\Delta_g y = 0.
\]
The functions $y^1, \, \dots \, y^n$ are called \textit{harmonic coordinates}. A detailed study on the topic can be found in \cite[Sec. 8.10, p.523]{Jost} or \cite[Ch. 10, Sec. 2.3]{PetersenBook}.
Harmonic coordinates have several properties which make them very suitable for our problem. Indeed, the following expression holds:
\begin{equation}\label{RicciHarm}
-\frac{1}{2} \Delta_{g} g_{ij} + Q_{ij}(g, \, \partial g) = \Ric^{g}_{ij} \mbox{ for every indices } i, \, j,
\end{equation}
where $g_{ij}:= g\coup*{\frac{\partial}{\partial y^i}, \, \frac{\partial}{\partial y^j}}$, $Q_{ij}$ is a universal polynomial depending on $g$ and its first derivatives $ \partial g$. The computations can be found in \cite[Ch. 10, Sec. 2.3]{PetersenBook}.

In the aforementioned references however, the authors work under stronger regularity assumptions on the metric. In our case we ought to perform a finer study. We prove the following result.

\begin{lemma}\label{LemmaHarm4Ch}
Let $u \fromto{\B^n_R}{\R}$ be given so that $u(0)=0$, $\partial u(0)=0$, $\norm{u}_{W^{2, \, p}(\B^n_R)} \le c_0$.  Set $G_\rho:=\Graph(u, \, \B^n_\rho)$ for $0<\rho \le R$. Then there exist $0<\rho_0=\rho_0(n, \, p, \, c_0)$ and a diffeomorphism $\eta \fromto{G_{\rho_0}}{\R^n}$ such that 
\[
\Delta_g \eta  = 0,\ \norm{\eta}_{W^{2, \, p}(G_{\rho_0})} \le c_0,
\]
with $\Delta_g$ being the Laplace-Beltrami operator associate to the manifold $G_{\rho_0}$.
\end{lemma}
\begin{proof}
By pull-back we work on the sequence $\coup*{ \B^n_R, \, g^k }_{k \in \N}$, with $g^k = \delta + \partial u_k \otimes \partial u_k$.
We are going to show the existence of a $0<\rho_0=(n, \, p, \, c_0)<R$ such that the map $\eta \fromto{\B^n_{\rho_0}}{\R^n}$ defined by
\[
\begin{cases}
\Delta_g \eta = 0 \mbox{ in } \B_{\rho_0}^n, \\
\restr{\eta}{\partial \B_{\rho_0}^n} = x
\end{cases}
\]
is a diffeomorphism and satisfies $\norm{\eta}_{W^{2, \, p}} \le c_0$. In order to simplify the proof, we will consider a rescaled version the problem.
Firstly, let us recall the expression in chart of the Laplace-Beltrami operator:
\begin{equation}\label{LaplaceBeltrami}
\Delta_g = \frac{1}{\sqrt{\det g}} \partial_i \coup*{ \sqrt{\det g} g^{ij} \partial_j }.
\end{equation} 
Let $\eta \fromto{\B^n_\rho}{\R^n}$ be a map satisfying $\Delta_g \eta = 0$. We say that the map 
\[
\eta_\rho \fromto{\B^n_1}{\R^n},\ \eta_\rho(z):= \frac{\eta(\rho z)}{\rho}
\]
satisfies $\Delta_{g_p} \eta_p = 0$, where $\Delta_{g_\rho}$ is the Laplace-Beltrami operator associated to the metric $g_\rho(z):=g(\rho z)$, defined on the ball $\B^n_1$. Indeed, if we set $a^{ij}:= \sqrt{\det g}g^{ij}$ and $a_\rho^{ij}:= \sqrt{\det g_\rho}g_\rho^{ij}$ , then
\begin{align*}
 \partial_i \coup*{ a^{ij}_\rho \partial_j \eta_\rho} (z) 
&=\partial_i a^{ij}_\rho(z) \partial_j \eta_\rho(z) + a^{ij}_\rho(z) \partial^2_{ij} \eta_\rho (z) \\ 
&=\rho \coup*{ \partial_i a^{ij}(\rho z) \partial_j \eta (\rho z) + a^{ij}(\rho z) \partial^2_{ij} \eta (\rho z)  } = \rho \partial_i \coup*{ a^{ij} \partial_j \eta} (\rho z)=0.
\end{align*}
Moreover, since $g=\delta + \partial u \otimes \partial u$ and $u$ satisfies $u(0)=\abs{\partial u(0)}=0$ and $\norm{u}_{2, \, p} \le c_0$, then we also have
\[
\lim_{\rho \to 0} \norm{g_\rho - \delta}_{W^{1, \, p}(\B^n_1)} = 0.
\]
We have reduced the problem to the following formulation:
\begin{itemize}
\item[{\bf Claim}] There exists $0<\epsilon_0=\epsilon_0(n, \, p)$ with the following property. If $g$ is a metric on $\B^n_1$ such that $\norm{g-\delta}_{W^{1, \, p}} \le \epsilon_0$, then there exists a diffeomorphism $\eta \fromto{\B^n_1}{\R^n}$ such that 
\[
\Delta_g \eta = 0,\ \norm{\eta - \Id}_{W^{2, \, p}} \le \epsilon_0.
\]
\end{itemize}
We now prove the Claim showing that the only solution $\eta$ of the problem
\[
\begin{cases}
\Delta_g \eta = 0, \\
\restr{\eta}{\partial \B_1} = x
\end{cases}
\] 
is a diffeomorphism, provided that $\epsilon_0$ is sufficiently small.
The solution $\eta$ exists and it is smooth, since the coefficients are smooth. We prove that $\eta$ satisfies the aforementioned a priori $W^{2, \, p}$-estimate and is a diffeomorphism in $\B_1$. 
From \eqref{LaplaceBeltrami} we get that our equation is of the divergence form:
\[
\partial_i \coup*{a^{ij} \partial_j \eta} = 0, \mbox{ where } \norm{a^{ij} - \delta^{ij}}_{W^{1, \, p}} \le \epsilon_0.
\]
Since $n<p$, we have that the Sobolev closeness is also a $C^{0, \, \alpha}$-closeness, thus we obtain that for $\epsilon_0$ sufficiently small the matrix $a=a^{ij}$ satisfies the bound
\[
\frac{1}{2} \delta \le a \le 2 \delta
\]
in the sense of quadratic forms. This bound will be useful when we will deal with sequences of metrics converging weakly, because it passes to the limit and triggers the classical elliptic theory for weak solutions (see \cite{GilTrud}).
Now we conclude:
\[
\norm{\eta - x}_{W^{2, \, p}(\B^n_1)} \le C(n, \, p) \norm{\Delta_g \coup*{\eta - x}}_{L^p(\B^n_1)} = C  \norm{\Delta_g x}_{L^p(\B^n_1)},
\]
and therefore
\begin{align*}
\Delta_g x^k  
&= \frac{1}{\sqrt{\det g}} \partial_i \coup*{ \sqrt{\det g} g^{ij} \partial_j x^k } = \frac{1}{\sqrt{\det g}} \partial_i \coup*{ \sqrt{\det g} g^{ik}} \\
&= \partial_i g^{ik} + \tr \coup*{g^{pq} \partial_i g_{pq}} g^{ik}.
\end{align*}
From this computation we obtain
\[
\norm{\eta - x}_{W^{2, \, p}(\B^n_1)} \le C(n, \, p) \norm{g - \delta}_{W^{1, \, p}(\B^n_1)} \le C \epsilon_0,
\]
and for $\epsilon_0$ sufficiently small we obtain the desired Claim.
\end{proof}
We use Lemma \ref{LemmaHarm4Ch} to prove Proposition \ref{Harmonicweak4Ch}. Indeed, let $\phi_k$, $u_k$ and $\Graph(u_k, \, \B^n_R)$ be as in the hypothesis of Proposition \ref{Harmonicweak4Ch}. Using the pull-back, we work in $\coup*{\B^n_R, \, g^k}$. Since $u_k \debto u_0$  weakly in $W^{1, \, p}$ and $n<p$ by hypothesis, we know that $u_k \to u_0$ strongly in $C^{1, \, \alpha}$, then  $\partial u_k \otimes \partial u_k \to \partial u_0 \otimes \partial u_0$ strongly in $C^{1, \, \alpha}$, and 
\begin{equation}\label{weakconv4ch}
 \underbrace{\partial^2 u_k \otimes \partial u_k + \partial u_k \otimes \partial^2 u_k}_{\partial \coup*{\partial u_k \otimes \partial u_k}} \debto  \underbrace{\partial^2 u_0 \otimes \partial u_0 + \partial u_0 \otimes \partial^2 u_0}_{\partial \coup*{\partial u_0 \otimes \partial u_0}} \mbox{ in } L^p.
\end{equation}
From \eqref{weakconv4ch} we deduce  $g^k \debto g^0 = \delta + \partial u_0 \otimes \partial u_0$ in $W^{1, \, p}$. We consider then harmonic coordinates $\eta_k \fromto{\B^n_{\rho_0}}{\R^n}$ built as in Lemma \ref{LemmaHarm4Ch}. From the discussion made above, one can easily infer that $\eta_k$ converges weakly in $W^{2, \, p}$ to the vector valued function $\eta_0$ which is of class $W^{2, \, p}$ and weakly solves the system
\[
\begin{cases}
\Delta_{g^0} \eta_0 = 0, \\
\restr{\eta_0}{\partial \B^n_{\rho_0}} =x,
\end{cases}
\]
Let us now call 
\[
g^k_{ij} := g^k \coup*{ \frac{\partial}{\partial \eta_k^i}, \, \frac{\partial}{\partial \eta_k^j}}, \mbox{ where } \frac{\partial}{\partial \eta^i}:=d \eta \cquad*{ \frac{\partial}{\partial x^i}}.
\] Then every $g^k_{ij}$ solves system \eqref{RicciHarm}, namely
\begin{equation*}
 -\frac{1}{2} \Delta_{g^k} g^k_{ij} + Q_{ij}(g^k, \, \partial g^k) = \Ric^{k}_{ij},
 \end{equation*} 
with $\Ric^k_{ij} := \Ric^k \coup*{ \frac{\partial}{\partial \eta_k^i}, \, \frac{\partial}{\partial \eta_k^j}}$. Then, this equation passes to the limit in  $g^0_{ij}=g^0 \coup*{ \frac{\partial}{\partial \eta_0^i}, \, \frac{\partial}{\partial \eta_0^j}}$, which solves the distributional equation
\begin{equation}\label{EinsteinSystem4Ch}
 -\frac{1}{2} \Delta_{g^0} g^0_{ij} + Q_{ij}(g^0, \, \partial g^0) = \lambda_0 g^0_{ij}.
\end{equation}
Following the computations leading to equation \eqref{RicciHarm} as made in \cite[Sec. 2.3]{PetersenBook}, we can easily notice that the polynomial $Q_{ij}(g^0, \, \partial g^0)$ is of class $L^{p/2}$. Therefore we can apply the bootstrap technique to deduce regularity. 
Indeed, every $g^0_{ij}$ is a $W^{1, \, p}$-weak solution of the equation
\[
L[v]=f,
\] 
where $f$ is a $L^{p/2}$-function. By the Morrey estimates, we know that every $g^0_{ij}$ is actually in $W^{2, \, p/2}$, and in particular 
\[
\partial g^0 \in L^{\coup*{p/2}^*}, \mbox{ where } \coup*{p/2}^* = \frac{n \coup*{p/2} }{n - \coup*{p/2}} = \frac{np}{2n-p}.
\]
A straightforward computation shows
\[
\coup*{p/2}^*>p \Leftrightarrow \frac{np}{2n-p} > p \Leftrightarrow p>n,
\]
and therefore every $Q_{ij}(g^0, \, \partial g^0) \in L^{p_1 / 2}$ for some $p_1=\coup*{p/2}>p$. We proceed inductively until we find $Q_{ij}(g^0, \, \partial g^0) \in L^{p_N/2}$ for some $p_N>2n$. In this case, we obtain that every $g^0_{ij} \in C^{1, \, \alpha}$. At this point, we notice that $Q_{ij}(g^0, \, \partial g^0) \in C^{0, \, \alpha}$. From the Schauder estimates we infer that every $g^0_{ij} \in C^{2, \, \alpha}$, thus rendering $a^{ij}, \, Q_{ij} \in C^{1, \, \alpha}$. Inductively we obtain that $g^0$ is in $C^{k, \, \alpha}$, therefore it is smooth. It can be also proved, that in this context, the metric is actually analytic, and hence we obtain our desired regularity. We refer to \cite{GilTrud} for an overall synthesis on all the aforementioned estimates and elliptic regularity results. Since $g^0$ is regular and satisfies \eqref{EinsteinSystem4Ch}, then the hypersurface $\Graph(u_0, \, \B^n_{\rho_0})$ is Einstein and $\Ric^{0} = \lambda_0 g^0$.
\end{proof}
Now we deal with Proposition \ref{MeanRicciPositive4Ch}

\begin{proof}[Proof of Proposition \ref{MeanRicciPositive4Ch}]
Again we need a useful lemma. 
\begin{lemma}\label{SigmaBound4Ch}
If $\Sigma$ satisfies either \eqref{PinchingStrict} or \eqref{Pinchingc0}, then there exists $0<D_0=D_0(n, \, p, \, c_0)$ such that 
\[
\diam_g \Sigma \le D_0.
\]
\end{lemma}

\begin{proof}
A smooth, closed hypersurface satisfying either \eqref{PinchingStrict} or \eqref{Pinchingc0} allows us to apply Lemma \ref{GraphChartLem} (see Remark \ref{noconv}). Now we consider two points $p_0$, $q \in \Sigma$, such that $d_g(p_0, \, q) = \diam_g(\Sigma)$. Such points clearly exist by compactness. By virtue of Lemma \ref{GraphChartLem} we are able to find $Q$ geodesic balls $B^g_1, \, \dots B^g_Q$, with the following properties: $p_0 \in B^g_1$, $q \in B^g_Q$, $B^g_{i} \cap B^g_{i+1} \neq \varnothing$ and $Q \le N$, where $N=N(n, \, c_0, \, p)$ is the natural number given by Lemma \ref{GraphChartLem}. Then, for every $i=1, \dots, Q-1$ we choose a point $p_i \in B^g_i \cap B^g_{i+1}$, and set $p_Q:=q$. Naturally, since $p_i$, $p_{i+1} \in B^g_i$, the following inequality holds:
\[
 d_g (p_i, q_i) \le 2R.
\]
Then by triangle inequality, we find our desired bound.
\[
\diam_g(\Sigma) = d_g(p_0, \, q) = d_g(p_0, \, p_Q) \le \sum_{i=0}^{Q-1} d_g(p_i, \, p_{i+1}) \le 2QR=D(n, \, p, \, c_0).
\]
\end{proof}
We now come to the proof of Proposition \ref{MeanRicciPositive4Ch}.
Let us argue by compactness, and let $\coup*{\Sigma_k}_{k \in \N}$ be a sequence of closed hypersurfaces satisfying either \eqref{PinchingStrict} or \eqref{Pinchingc0} and satisfying 
\[
\lim_k \norm{\Ric^{g^k} - (n-1)\lambda_0 g^k}_{L^p_k} = 0.
\]
Up to translations, we can assume $b(\Sigma_k)=0$ for every $k \in \N$, where \[
b(\Sigma_k):= \fint_{\Sigma_k} x \, dV_{g^k}(x)
\]
denotes again the barycentre of $\Sigma_k$. Then, by Lemma \ref{SigmaBound4Ch}, the sequence $\coup*{\Sigma_k}_{k \in \N}$ is a sequence of compact sets, all enclosed in a ball. Hence, we can use the classical compactness theorem of Hausdorff to extract a subsequence converging in the Hausdorff distance to a compact set $\Sigma_0 \subset \R^{n+1}$. Let $q_0 \in \Sigma_0$ be a point that attains the maximum distance from $0$, i.e. 
\[
\abs{q_0}^2 = \max_{q \in \Sigma_0} \abs{q}^2.
\]
Let then $\coup*{q_k}_{k \in \N}$ be a sequence of points $q_k \in \Sigma_k$ converging to $q_0$, and $\phi_k$ be the associated graph parametrizations with center $q_k$ and width $R$. Then, up to subsequences, $\phi_k$ converges weakly in $W^{2, \, p}$ to a function $\phi_0 \fromto{\B^n_R}{\R^{n+1}}$.
Since 
\[
\phi_k(z)=q_k + \Phi_k 
\begin{pmatrix}
z \\
u^k(z)
\end{pmatrix},
\] 
it is obvious that $\Phi_k \to \Phi_0$  and $u^k \debto u_0$ weakly in $W^{2, \, p}$. Hence $\phi_0$ is a graph parametrisation, and $\phi_0(0)=q_0$, $\phi_0(\B^n_R) \subset \Sigma_0$. Moreover, since the isometries $\Phi_k$ clearly alter neither the final result nor the proof, we are therefore in the hypothesis of Proposition \ref{Harmonicweak4Ch}, and obtain that $u_0$ is actually smooth and $\phi_0(\B^n_{\rho_0}) = \Graph(u_0, \, \B^n_{\rho}) \subset \Sigma_0$ is a smooth, Einstein manifold. The map  $\phi_0$ has another remarkable property: it satisfies
\[
\abs{\phi(0)}^2 = \abs{q_0}^2 =  \max_{z \in \B^n_{\rho_0}} \abs{\phi(z)}^2.
\]
Deriving twice, we obtain the following equalities holding in $0$:
\[
\underbrace{\scal{\partial_i \phi_0, \, \phi_0(0)}=0}_{\Rightarrow \scal{q_0}^\perp = T_{q_0} \Sigma }, \quad \partial^2 \phi(0) \le 0 \Rightarrow
\underbrace{\scal{ \partial^2_{ij} \phi_0, \, \phi_0(0) }}_{=-\abs{q_0}^{-1} A_{ij}} + 
\underbrace{\scal{ \partial_i \phi_0, \, \partial_j \phi_0 }}_{=g_{ij}} \le 0,
\]
from which we obtain the equality 
\begin{equation}\label{AlmostRicciConvex}
\restr{h}{q_0} \ge \frac{1}{\abs{q_0}} \restr{g}{q_0}.
\end{equation}
Equality \eqref{AlmostRicciConvex} holds just in one point, but it is enough: indeed, $\phi_0(\B^n_{\rho_0})$ is smooth and Einstein, thus at $q_0$ we also have the estimate:
\[
(n-1)\lambda_0 g = \Ric \ge \frac{(n-1)}{\abs{q_0}^2} g \ge \frac{(n-1)}{D_0^2} g \Rightarrow \lambda \ge \frac{1}{D_0^2},
\]
and hence $\lambda_0>0$. Since $\phi_0$ parametrizes an Einstein hypersurface, the equality holds in the whole ball $\B^n_{\rho_0}$. Thanks to Theorem \ref{RigidityRicci2Ch} we obtain that $h=\mu_0 g$, where $\mu_0 = \sqrt{\lambda_0}$. This tells us that $\phi_0$ parametrizes a portion of a round sphere with radius $\mu_0^{-1}$. Since $\phi_k$ converges weakly to $\phi_0$ in $W^{2, \, p}$, we obtain that also the associated function $u^k$ converge to $\mu_0^{-1} \coup*{ \sqrt{1 - \mu_0^2 \abs{x}^2 } - 1 }$ weakly in $W^{2, \, p}$. Since $n<p$ the convergence is also strong in $C^{1, \, \alpha}$. The study we made insofar works not only for $\phi_0$ but for every possible parametrization: let us go back to our sequence  $\coup*{\Sigma_k}_{k \in \N}$ of closed hypersurface. Now we know that $\lambda_0>0$, and thus for every sequence $q_k \in \Sigma_k$, for every $\phi_k$ graph parametrisation  with center $q_k$ and width $\rho_0$, we obtain that every weak limit must parametrize a portion of a sphere with radius $\S^n_{\mu_0^{-1}}$ with $u_0(x)=\mu_0^{-1} \coup*{ \sqrt{1 - \mu_0^2 \abs{x}^2 } - 1 }$ as parametrization, and the convergence is strong in $C^1$. This proves the proposition. 
\end{proof}

Now we repeat the very same passages made in the proof of Theorem \ref{MainCor}, and we easily obtain the corollary:
\begin{cor}\label{RicciCloseCh1}
For every $0<\epsilon$ there exists $0<\delta=\delta(n, \, p, \, c_0, \, \epsilon)$ with the following property. 

Let $\Sigma$ be a closed hypersurface in $\R^{n+1}$ satisfying either \eqref{PinchingStrict} or \eqref{Pinchingc0}. If $\norm{\tRic}_{L^p(\Sigma)}=0$, then there exists a vector $c \in \R^{n+1}$ such that $b(\Sigma-c)=0$, and the radial parametrization
\[
\psi \fromto{\S^n}{\Sigma},\ \psi(x)=e^{f(x)} x 
\]
is well defined. Moreover $\norm{f}_{C^1} \le \epsilon$.
\end{cor}
This concludes the study of the qualitative $C^1$-closeness.

\subsection{The oscillation argument: proof of Proposition \ref{RicciOscillationLp2Ch}}\label{SectionProof252Ch}
The proof of Proposition \ref{RicciOscillationLp2Ch} relies on 
the following oscillation lemma, whose $L^2$-version has been proved under weaker assumptions in \cite{DLT}.
\begin{lemma}\label{ScalOscillationLp2Ch}
Let $\Sigma$ be a closed, convex hypersurface in $\R^{n+1}$. Assume $\Sigma$ satisfies condition $a)$ or condition $b)$ as in Proposition  \eqref{RicciOscillationLp2Ch}. In the latter one, the positivity assumption of $\overline{\Scal}$ is not required. Then the following inequality holds.
\begin{equation*}\label{ScaliOscillationLpEq2Ch}
\norm*{\Scal - \overline{\Scal}}_{L^p(\Sigma)} \le C(n, \, p, \, c_0) \norm{\tRic}_{L^p(\Sigma)}.
\end{equation*}
\end{lemma}
We prove Lemma \ref{ScalOscillationLp2Ch} in Appendix \ref{AppendixOsc}.
From Lemma \ref{ScalOscillationLp2Ch} we wish to derive Proposition \ref{RicciOscillationLp2Ch}. Since the second fundamental form is a symmetric tensor, we know by the spectral theorem that it is diagonalizable. Let $\lambda_1, \, \dots \lambda_n$ be its eigenvalues. Our idea to use equation \eqref{GaussEq} in order to interpret \eqref{RicciOscillationLpEq2Ch} as a polynomial inequality involving the eigenvalues of $h$. Let us define indeed the polynomials
\begin{align}
p(\lambda)
&= \frac{1}{4}\abs*{ h(\lambda) \nomizu h(\lambda) - \kappa \delta \nomizu \delta }^2 = \sum_{i \neq j} \coup*{\lambda_i \lambda_j - \kappa }^2, \label{PolyRiemann} \\
q(\lambda)
&= \abs*{H(\lambda) h(\lambda) - h(\lambda)^2 - (n-1) \kappa \delta}^2 = \sum_{i} \coup*{\lambda_i \coup*{\sum_{i \neq j} \lambda_j} - (n-1) \kappa}^2. \label{PolyRicci}
\end{align}
Here $\kappa \in \R$, and in general we choose it so that $n(n-1)\kappa = \overline{\Scal}$.
Using this notation we can let Proposition \ref{RicciOscillationLp2Ch} easily follow from the following lemma. 
\begin{lemma}\label{PolyLemma2Ch}
Let $0 < \kappa$ be given. Then there exist constants $c_1$, $c_2$, depending on $n$ such that 
\begin{equation*}\label{PolyIneq}
c_1 \le \frac{p(\lambda)}{q(\lambda)} \le c_2, \mbox{ for any } \lambda \in \R.
\end{equation*}
\end{lemma}

From Lemma \ref{PolyLemma2Ch} we easily conclude by integrating the inequality for the eigenvalues of $h$. Indeed, if the mean of the scalar curvature $\overline{\Scal}$ is positive, then from Lemmas \ref{ScalOscillationLp2Ch} and \ref{PolyLemma2Ch} we obtain:
\begin{align*}
\norm*{\Riem - \frac{\overline{\Scal}}{2n(n-1)} g \nomizu g }_p =
\norm*{\Riem - \frac{\kappa}{2} \, g \nomizu g}_p 
\le C(n, \, p) \norm{\Ric - (n-1)\kappa g}_p \le C(n, \, p, \, c_0)\norm{\tRic}_p.
\end{align*}

The positivity of the quantity $\overline{\Scal}$ is easily recovered also in case a): it is straightforward to prove that closed, convex and smooth manifolds have positive mean of the scalar curvature. This quantity is trivially non-negative since we have the formula 
\[
\Scal = \sum_{i \neq j} \lambda_i \lambda_j,
\]
and all the $\lambda_i$ are non-negative by convexity. Let us show that the quantity $\overline{\Scal}$ is actually positive. We consider the function
\[
\xi \colon p \in \Sigma \longmapsto \abs{p}^2.
\]
Let $p_0$ be a maximum for $\xi$, and $\phi_0 \fromto{\B^n_R}{\Sigma}$ be a graph parametrisation around $p_0$, i.e. $\phi_0(0)=p_0$. Since $p_0$ is the maximum of $\xi$, we notice that $\phi_0$ satisfies
\[
\abs{\phi(0)}^2 = \abs{p_0}^2 =  \max_{z \in \B^n_{\rho_0}} \abs{\phi(z)}^2.
\]
Deriving twice, we obtain the following equalities holding in $0$:
\[
\underbrace{\scal{\partial_i \phi_0, \, \phi_0(0)}=0}_{\Rightarrow \scal{p_0}^\perp = T_{p_0} \Sigma }, \quad \partial^2 \phi(0) \le 0 \Rightarrow
\underbrace{\scal{ \partial^2_{ij} \phi_0, \, \phi_0(0) }}_{=-\abs{p_0}^{-1} h_{ij}} + 
\underbrace{\scal{ \partial_i \phi_0, \, \partial_j \phi_0 }}_{=g_{ij}} \le 0,
\]
from which we obtain the equality 
\[
\restr{h}{p_0} \ge \frac{1}{\abs{p_0}} g.
\]
Thus the function $\Scal=\sum_{i \neq j} \lambda_i \lambda_j$ is non-negative and positive in a neighbourhood of $p_0$, hence $\overline{\Scal}>0$.

Let us prove Lemma \ref{PolyLemma2Ch} and conclude.
\begin{proof}[Proof of Lemma \ref{PolyLemma2Ch}]
We first show that the polynomials $p$ and $q$ defined by \eqref{PolyRiemann} and \eqref{PolyRicci}  have the same zeros. Let $Z(p) := \Set{p = 0}$ and $Z(q):=\Set{q = 0}$ be the zero sets  of $p$, $q$, respectively. We claim that:
\begin{equation}\label{ZeroChar2Ch}
Z(p) = Z(q)= \Set{ \sqrt{\kappa} e, \, -\sqrt{\kappa} e}, \mbox{ where } e:=\sum_{i=1}^n e_i.
\end{equation}
We split the proof of Lemma \ref{PolyLemma2Ch} into four main parts. In the first two parts we prove Claim \eqref{ZeroChar2Ch} for $p$ and $q$ respectively. In the third part we study the behaviour of the ratio $p/q$ as $\abs{\lambda}$ approaches $\infty$. In the fourth part we study the behaviour of $p/r$ as $\lambda \to \pm \sqrt{\kappa} e$. From this analysis the lemma will easily follow.

\subparagraph*{Zeros of $p$}
Let $\lambda= \coup*{\lambda_1, \, \dots \lambda_n}$ be given so that $p(\lambda)=0$. Since $p$ is a sum of squares, we get:
\begin{equation}\label{PolyRiemCond}
\lambda_i \lambda_j = \kappa, \mbox{ for every } i \neq j.
\end{equation}
Since $0<\kappa$ we also know that $\lambda_i \neq 0$ for every $i$. Then, for every $i \neq j \neq k$ we immediately find:
\[
\lambda_i \lambda_j = \lambda_j \lambda_k \Rightarrow \lambda_j = \lambda_k=:t,
\]
from which we deduce $\lambda= t e$ for some $t \neq 0$. From \eqref{PolyRiemCond} we immediately deduce $t^2=\kappa$ and the thesis.

\subparagraph*{Zeros of $q$}
Let $\lambda= \coup*{\lambda_1, \, \dots \lambda_n}$ be given so that $q(\lambda)=0$. Since $q$ is a sum of squares, we infer the following system:
\begin{equation}\label{PolyRicci2}
H \lambda_i - \lambda_i^2 = (n-1) \kappa, \mbox{ for every } i,
\end{equation}
where we have set 
\[
H := \sum_{i=1}^n \lambda_i = \scal{\lambda, \, e}.
\]
Notice that from \eqref{PolyRicci2} we have that $\lambda_i \neq 0$ for every $i$. Again, we claim that $\lambda_i=\lambda_j$ for every $i$, $j$. If the claim is true, system \eqref{PolyRicci2} for $\lambda=te$ is reduced to 
\[
(n-1)t^2 = (n-1) \kappa,
\]
and this proves our claim. Let us assume by contradiction that there exist two indices $i$, $j$ such that $\lambda_i \neq \lambda_j$. From \eqref{PolyRicci2} we infer
\begin{equation}\label{PolySum}
H \lambda_i - \lambda_i^2 = H \lambda_j - \lambda_j^2 \Rightarrow H \coup*{\lambda_i - \lambda_j} = \lambda_i^2 - \lambda_j^2 \Rightarrow H = \lambda_i + \lambda_j.
\end{equation}
Substituting \eqref{PolySum} in \eqref{PolyRicci2}, we obtain 
\begin{align}
\lambda_i \lambda_j 
&= (n-1)\kappa, \label{PolyProd} \\
(\lambda_i + \lambda_j) \lambda_h - \lambda_h^2 
&= (n-1)\kappa, \mbox{ for every } h \neq i, \, j. \label{Polykap} 
\end{align}
Assume there exists $\lambda_h \neq \lambda_i$. From equalities \eqref{PolyProd} and \eqref{Polykap} we obtain:
\[
\lambda_i \lambda_j = (\lambda_i + \lambda_j) \lambda_h - \lambda_h^2 \Rightarrow \lambda_j \coup*{\lambda_i - \lambda_h} = \lambda_h \coup*{\lambda_i - \lambda_h},
\]
from which we easily infer $\lambda_h=\lambda_j$. Therefore the coefficients $\lambda_1, \, \dots \,  \lambda_n$ of the point $\lambda$ can take at most two different values. Call them $a$ and $b$, and assume $a$ appears $k$ times and $b$ appears $n-k$ times in the coordinates of $\lambda$. From equality \eqref{PolySum} we have 
\[
(k-1) a + (n-k-1) b = 0.
\]
If both $k-1$ and $n-k-1$ are positive, then $a$ and $b$ must have different sign, and equation \eqref{PolyProd} is violated. If one of them is $0$, say $k-1=0$, then we must have $b=0$, but again equation \eqref{PolyProd} would be violated. Hence all the values are equal, and we easily find the thesis. Notice how the estimate fails when $n=2$. In this case, equality \eqref{PolySum} is not useful, and the polynomials $p$ and $q$ degenerate to
\[
p(\lambda) = q(\lambda) = \coup*{\lambda_1 \lambda_2 - \kappa}^2,
\]
and therefore $Z(p)=Z(q) = \Set{(x, \, y) \in \R^2 | xy = \kappa}$.

\subparagraph*{Boundedness at infinity}
Now we show that the ratio $p(\lambda)/q(\lambda)$ is bounded from above and below when $\abs{\lambda}$ attains large values. A simple computation shows:
\[
\liminf_{|\lambda| \to \infty} \frac{p(\lambda)}{q(\lambda)} = \inf_{\lambda \in \S^n} \frac{\sum_{i \neq j} \lambda_i^2 \lambda_j^2 }{\sum_i \lambda_i^2 \left (\sum_{i \neq j} \lambda_j\right)^2}, \qquad
 \limsup_{|\lambda| \to \infty} \frac{p(\lambda)}{q(\lambda)} = \sup_{\lambda \in \S^n} \frac{\sum_{i \neq j} \lambda_i^2 \lambda_j^2 }{\sum_i \lambda_i^2 \left (\sum_{i \neq j} \lambda_j\right)^2}.
\]
Note that this case represents the study of the ratio $p(\lambda)/q(\lambda)$ in the case $\kappa=0$. Let us do the computation.
Firstly, we claim that in this case the zero sets in the sphere of $p$ and $q$ are finite and satisfy 
\[
Z(p) = Z(q) = \Set{\pm e_1, \, \dots \pm e_n}.
\] 
The claim is straightforward for $p$. For q, let us consider a point $\lambda \in \S^n$ so that $q(\lambda)=0$. Keeping the notation used above, we have the equality:
\begin{equation}\label{PolyAgain}
\lambda_i^2 \coup*{H - \lambda_i}^2 = 0, \mbox{ for every i.}
\end{equation}
Since $\lambda \in \S^n$, then there must exist an index $i$ such that $\lambda_i \neq 0$. Therefore $H=\lambda_i$ must hold. If $\lambda_j = 0$ for all indices $j \neq i$, then necessarily $\lambda=\lambda_i e_i$ and $\lambda_i = \pm 1$, as claimed. If $\lambda_j \neq 0$ for some $j$, then the equality $H=\lambda_j$ must hold and hence $\lambda_j=\lambda_i$. We immediately deduce that the set $\Set{\lambda_1, \, \dots \lambda_n}$ can take only the values $0$ and $t$ for some $t \neq 0$, and not all $\lambda_i$ can be $0$ because $\lambda \in \S^n$. Let us assume w.l.o.g. that $\lambda_1= \, \dots =\lambda_k = t$ and $\lambda_{k+1}= \dots = \lambda_n=0$. From this we can write the equation \eqref{PolyAgain} as
\[
k(k-1)t^4=0,
\]
from which we infer $k=1$, and thus the claim. 

We show now how the ratio $p/q$ is bounded near the zeros of $p$ and $q$.
By symmetry, it is enough to consider the limit for $\lambda \to e_1$. Now we write $\mu := \lambda - e_1$, so that we can study the limit as $\mu \to 0$. Denoting $\tilde{p}(\mu):= p(e_1 + \mu)$, $\tilde{q}(\mu) := q(e_1 + \mu)$, we easily obtain 
\begin{align*}
\tilde{p}(\mu) 
&= 2 \sum_{j=2}^n \mu_j^2 + O(\abs{\mu}^3), \quad \tilde{q}(\mu) 
=  \sum_{j=2}^n \mu_j^2 + \left (\sum_{j=2}^n \mu_j\right)^2 + O(\abs{\mu}^3),
\end{align*}
where $O(\abs{\mu}^k)$ is a quantity which satisfies $\abs{O(\abs{\mu}^k)} \le C(n, \, k)\abs{\mu}^k$.
Therefore we can rewrite the ratio as
\[
\frac{\tilde{p}(\mu)}{\tilde{q}(\mu)} = \frac{2 + O(\abs{\mu})}{1 + \mathcal{R}(\mu) + O(\abs{\mu})  },
\]
where $\mathcal{R}$ satisfies
\[
0 \le \mathcal{R}(\mu) = \frac{\coup*{\sum_{j=2}^n \mu_j}^2}{\sum_{j=2}^n \mu_j^2} \le C(n),
\]
from which we easily deduce the upper and lower bounds.

\subparagraph*{Boundedness near the zeros}
We study now the behaviour of the ratio $p(\lambda)/q(\lambda)$ when $\lambda$ approaches the values $\pm \sqrt{\kappa}e$.  Again, by symmetry it is enough to study the limit at $\sqrt{\kappa} e$. We write $\mu := \lambda - \sqrt{\kappa}e$, and define again $\tilde{p}(\mu):= p(\sqrt{\kappa} e_1 + \mu)$, $\tilde{q}(\mu) := q( \sqrt{\kappa} e_1 + \mu)$. A straight computation for $\tilde{p}$ shows:
\begin{align*}
\tilde{p}(\mu)
&=\sum_{i \neq j} \coup*{\coup*{\mu_i + \sqrt{\kappa}} \coup*{\mu_j + \sqrt{\kappa}} - \kappa }^2 = \sum_{i \neq j} \coup*{ \sqrt{\kappa} (\mu_i + \mu_j) + \mu_i \mu_j }^2 \\
&=\kappa \sum_{i \neq j} (\mu_i + \mu_j)^2 + O(\abs{\mu}^3) = \kappa \sum_{i=1}^n \sum_{\substack{j=1 \\ j \neq i}}^n \mu_i^2 + 2 \mu_i \mu_j + \mu_j^2 + O(\abs{\mu}^3) =2\kappa(n-2) \abs{\mu}^2 + O(\abs{\mu}^3).
\end{align*}
For $\tilde{q}$ we have a similar expression:
\begin{align*}
\tilde{q}(\mu)
&=\sum_{i=1}^n \coup*{ \coup*{\mu_i + \sqrt{\kappa}} \sum_{j \neq i} \coup*{\mu_j + \sqrt{\kappa} } - (n-1)\kappa }^2 = \sum_{i=1}^n  \coup*{ \sqrt{\kappa} \coup*{ (n-1) \mu_i + \sum_{j \neq i} \mu_j} + O(\abs{\mu}^2) }^2 \\
&= \sum_{i=1}^n  \coup*{ \sqrt{\kappa} \coup*{ (n-2) \mu_i +H(\mu)} + O(\abs{\mu}^2) }^2 =\kappa \sum_{i=1}^n \coup*{ (n-2) \mu_i + H}^2 + O(\abs{\mu}^2) \\
&= (n-2)^2 \kappa \abs{\mu}^2 + (3n-4) \kappa H(\mu)^2 + O(\abs{\mu}^3),
\end{align*}
where again $H(\mu)=\sum_i \mu$. From these computations we can easily deduce the lemma. Indeed,
\[
\frac{\tilde{p}(\mu)}{\tilde{q}(\mu)} = \frac{2 + O(\abs{\mu})}{n-2 + \mathcal{R}(\mu) + O(\abs{\mu}) },
\]
where 
\[
0 \le \mathcal{R}(\mu) = \frac{(3n-4) H(\mu)^2}{(n-2) \abs{\mu}^2} \le C(n),
\]
and the lemma is proved.
\end{proof}

\subsection{Conclusion}
Corollary \ref{RicciCloseCh1} is not enough to conclude the estimate, because the Ricci operator seen as differential operator on $f$ is not elliptic. 
We shall conclude the proof of Theorem \ref{Thm1} and Theorem \ref{Thm2} with an idea, that reduces it to an application of Theorem \ref{MainCor}.
First of all, let us show an easy corollary of \ref{RicciCloseCh1}.
\begin{cor}\label{closekappa}
Under the hypothesis of Corollary \ref{RicciCloseCh1}, we have the inequality:
\[
\abs{\overline{\Scal}-n(n-1)} \le C(n, \, p, \, c_0) \epsilon.
\]
\end{cor}
\begin{proof}
The proof has basically already been given in \cite[Lemma 3.7]{GiofEin}. Indeed, although we lack the convexity assumption, from the $C^1$-closeness we are still able to obtain \cite[Equation (3.29)]{GiofEin} 
\begin{equation}\label{curvatureestimateCh4}
\Scal = n(n-1) - 2(n-1) \coup*{\Delta_\sigma f + f} + \coup*{\Delta f}^2 - \abs{\nabla^2 f}^2 + \mathcal{R},
\end{equation}
where $\mathcal{R}$ satisfies 
\[
\abs{\mathcal{R}} \le  C \epsilon \, \coup*{\abs{f} + \abs{\nabla f} + \abs{\nabla^2 f} }.
\]
Since 
\[
\abs*{\int_{\S^n} \mathcal{R}} \le C(n, \, p, \, c_0) \epsilon,
\]
we integrate \eqref{curvatureestimateCh4} and obtain the corollary.
\end{proof}

Now we can complete the proof of Theorem \ref{Thm1} and Theorem \ref{Thm2}.

\begin{proof}[Proof of Theorem \ref{Thm1}]
This will follow directly by the proof of Theorem \ref{MainCor}. Indeed, in this case the strict convexity is translated into an inequality between the traceless Ricci and the traceless second fundamental form, that we would not normally have.
Contracting the indices in \eqref{GaussEq}, we have the equation:
\begin{equation*}
\Ric^i_j = H h^i_j - h^i_k h^k_j.
\end{equation*}
Let then $\lambda_1 \le \dots \le \lambda_n$ be the eigenvalues of $h$. Then the Ricci tensor has eigenvalues $\Lambda_1, \, \dots \Lambda_n$ which satisfy the following equality:
\begin{equation*}\label{RicciEigenv}
\Lambda_j = \lambda_j \sum_{j \neq k} \lambda_k, \quad \forall j=1,\dots,n.
\end{equation*}
By assumption \eqref{PinchingStrict}, we know that $\lambda_j \ge \Lambda$ for every $j=1,\dots,n$, and this allows us to perform the following estimate:
\begin{align*}
|\tRic|^2 
&= \sum_{i \neq j} |\Lambda_i - \Lambda_j|^2 = \sum_{i \neq j} \left (\sum_{k \neq i, \, j} \lambda_k\right)^2 \left |\lambda_i - \lambda_j\right |^2\ge (n-2)^2\Lambda^2  \sum_{i \neq j} \left |\lambda_i - \lambda_j\right |^2 = (n-2)^2 \Lambda^2 |\hdot|^2,
\end{align*}

from which we deduce
\begin{equation*}\label{StrictlyAlmost}
 \|\hdot\|_{L^p(\Sigma)} \le C(n, \, p, \, \Lambda) \|\tRic\|_{L^p(\Sigma)}.
\end{equation*}
This shows how in the strictly convex case, having small $L^p$-norm of the traceless Ricci tensor implies having small $L^p$-norm of the traceless second fundamental form. 

We choose $\delta_0$ sufficiently small so that the hypothesis of \ref{MainCor} holds, and thus we find a vector $c=c(\Sigma)$ such that the associated radial parametrization $ \psi \fromto{\S^n}{\Sigma - c}$  satisfies
\[
\left \|\psi - \Id \right \|_{W^{2, \, p}(\S^n)} \le C \| \hdot \|_{L^p} \le C \| \tRic \|_{L^p(\Sigma)},
\]
as desired.
\end{proof}

\begin{proof}[Proof of Theorem \ref{Thm2}]
 Let us write $\Scal=n(n-1) \kappa$, and assume $\abs*{\kappa - 1} \le \frac{1}{2}$. 

We denote with $\lambda_1 \le \dots \le \lambda_n$ the eigenvalues of $h$ and we consider $\kappa := \frac{1}{n(n-1)} \overline{\Scal}$. As proved in Corollary \ref{closekappa}, we can choose $\delta \le \delta_0$ so that $\kappa$ is between $1/2$ and $2$. Then, given Proposition \ref{RicciOscillationLp2Ch}, we rewrite inequality \eqref{RicciOscillationLpEq2Ch} in terms of the eigenvalues of $h$ and obtain 
\begin{equation}\label{lambdaij}
\left \|\lambda_i \lambda_j - \kappa\right \|_{L^p} \le C \|\tRic\|_{L^p},\ \forall \, i \neq j.
\end{equation} 
From \eqref{lambdaij}, we easily infer for every $k=1,\dots,n$
\begin{equation} \label{lambdaijk}
\left \|\lambda_k(\lambda_i - \lambda_j)\right \|_{L^p} \le C \|\tRic\|_{L^p}.
\end{equation}
 Now, for every $0<\Lambda^2< \kappa$, we define 
\begin{equation*} 
E_\Lambda := \set{q \in \Sigma : |\lambda_{n}(q)| > \Lambda}.
\end{equation*}
We use the set $E_\Lambda$ and its complement in order to perform an estimate on the difference $\left |\lambda_i - \lambda_j\right |$. Indeed, since $\lambda_1 \leq \dots \leq \lambda_n \leq \Lambda$ for every $q \in E^c_\Lambda$, we get the bounds
\[
|\kappa - \Lambda^2| \left |E_\Lambda^c\right |^{\frac{1}{p}} \le \left \|\lambda_i \lambda_j - \kappa \right \|_{L^p(E_\Lambda^c)} \le C \|\tRic\|_{L^p},
\]
which hold for every $ i \neq j$ and $0<\Lambda^2 < \kappa$. Thus we have found 
\begin{equation}\label{ElambdaCEst}
\left |E^c_\Lambda\right |^{\frac{1}{p}} \le \frac{C}{|\kappa - \Lambda^2|} \|\tRic\|_{L^p}.
\end{equation}
On the other hand, for any $i, \, j= 1, \, \dots n-1$, $i \neq j$ we find:
\[
\left \|\lambda_i - \lambda_j\right \|_{L^p(E_\Lambda)} \le \frac{1}{\Lambda} \left \| \lambda_n (\lambda_i - \lambda_j)\right \|_{L^p(E_\Lambda)} \overset{\eqref{lambdaijk}}{\le} \frac{C}{\Lambda} \|\tRic\|_{L^p},
\]
which gives us 
\begin{equation}\label{ElambdaEst}
\left \|\lambda_i - \lambda_j\right \|_{L^p(E_\Lambda)} \le \frac{C}{\Lambda} \|\tRic\|_{L^p}.
\end{equation}
Combining \eqref{ElambdaCEst} and \eqref{ElambdaEst} we obtain 
\begin{equation}\label{almost}
\left \|\lambda_i - \lambda_j\right \|_{L^p} \le C \left ( \frac{1}{\Lambda} + \frac{1}{|\kappa - \Lambda^2|}  \right) \|\tRic\|_{L^p}. 
\end{equation}
This estimate holds for every $i \neq j$, $i, \, j = 1, \, \dots n-1$ and for every $0<\Lambda^2< \kappa$. Equation \eqref{almost} is not sufficient to conclude, because it does not give an estimate on the quantity $\left |\lambda_n - \lambda_j\right |$. This is the only quantity that prevents this proof to give a linear estimate in \eqref{Thm2Est}, forcing us to introduce the exponent $\alpha$.
Indeed, to deal with $\left |\lambda_n - \lambda_j\right |$, we define
\begin{equation*}
\tilde{E}_\Lambda := \set{q \in \Sigma | \left |\lambda_{n-1}(q)\right | > \Lambda}.
\end{equation*} 
With the very same considerations used to deduce \eqref{ElambdaCEst}, we obtain
\begin{equation}\label{Etilambda}
\left |\tilde{E}^c_\Lambda\right |^{\frac{1}{p}} \le \frac{C}{\kappa - \Lambda^2} \|\tRic\|_{L^p}.
\end{equation} 
Now we fix $q \in (n, \, p)$. Then, via H\"older inequality we get
\begin{equation}\label{Etilambda1}
\left \|\lambda_n- \lambda_j\right \|_{L^q(\tilde{E}_\Lambda^c)} \le C(n, \, p, \, c_0)\|\tRic\|_{L^p}^\alpha,
\end{equation} 
where $\alpha$ is defined as in Theorem \ref{Thm2}. Combining \eqref{Etilambda} with \eqref{Etilambda1}, we obtain 
\begin{equation}\label{almost1}
\left \|\lambda_n - \lambda_j\right \|_{L^q} \le C \left ( \frac{1}{| \kappa - \Lambda^2 |} + 1\right) \|\tRic\|_{L^p}^\alpha.
\end{equation} 
Choosing $\Lambda= \sqrt{\frac{\kappa}{2}}$ and plugging together \eqref{almost} and \eqref{almost1}, we deduce 
\[
\|\hdot\|_{L^q} \le \frac{C}{\sqrt{\kappa}}\|\tRic\|_{L^p}^\alpha \le \sqrt{2} C \|\tRic\|_{L^p}^\alpha.
\]
We are thus under the assumptions of Theorem \ref{MainCor}, which provide a radial parametrization  $\psi \fromto{\S^n}{\Sigma}$, $\psi = e^f \Id,$  and a vector $c=c(\Sigma)$ such that \ref{Thm2Est} holds.
\end{proof}

\section{Appendix}\label{AppendixOsc}

\subsection*{Proof of Proposition \ref{AnisOscLp3Ch} and Lemma \ref{ScalOscillationLp2Ch}}
We recall the equations we are going to study.
\begin{align}
&\nabla H_F = \divv S_F. \ \label{EquationC}\\
&\nabla \Scal = \frac{2n}{n-2} \divv \tRic, \label{EquationB} 
\end{align}
These equations present clear similarities, since they are all variations of the equation
\[
\nabla \gu = \divv \gf
\]
in a closed manifold. In both cases, an immediate but naive covering argument may show the existence of a number $\lambda$ such that 
\begin{equation}\label{BaseEstimate}
\norm{\gu - \lambda}_{L^p(M)} \le C(M) \norm{\gf}_{L^p(M)}.
\end{equation}
The problem in such argument is that we do not only have to obtain an estimate, but also to keep an eye on the constant $C$, which in our case has to depend only on general parameters. 
We are going to show an improved estimate which is basically \eqref{BaseEstimate}, but gives a better control on the bounding constant. The technique we are going to use has been used and developed in \cite{Daniel}, where the author deals with the isotropic version of equation \eqref{EquationC}. Considered the massive use we are making of this type of estimates and ideas throughout the paper, we have decided to report the proof.
We split it into the following steps.
\begin{itemize}
\item We show by direct computation in graph parametrisation how the two equations can be written as particular cases of a more general lemma.
\item We obtain a local estimate of our desired inequality, with the bounding constant depending on determined parameters.
\item We show how to make the local estimate global without losing the information on the bounding  constant.
\end{itemize}

\subsubsection*{Step 1: Unifying the equations.}
We recall that from Lemma \ref{Computations4Ch}, if $M=\Graph(u, \, \B^n)$ is a smooth graph, then the following formulas hold:
\begin{align}
g_{ij} &= \delta_{ij} + \partial_i u \, \partial_j u \label{gGraphi} \\ 
g^{ij} &= \delta^{ij} - \frac{\partial^i u \, \partial^j u }{1 + \abs{\partial u}^2} \label{ginvGraphi} \\ 
dV_g &= \sqrt{1 + \abs{\partial u}^2} \, dx \label{VolumeGraph} \\ 
\Gammag_{ik}^k &= v^k h_{ij}, \mbox{ where } v= \frac{\partial u }{\sqrt{1 + \abs{\partial u}^2}}. \label{ChristoffelGraph}
\end{align}
We compute the divergence term of equations \eqref{EquationC}, \eqref{EquationB} in graph parametrisation, and notice how this does not depend on Christoffel symbols.
\begin{itemize}
\item[\eqref{EquationC}]
Firstly, we need to prove that equation \eqref{EquationC} holds. This follows from the computation below. For notation simplicity, we drop the subscript $F$ from  $\coup*{S_F}^i_j=S^i_j$ and $\coup*{A_F}^i_j=A^i_j$ :
\begin{align*}
\divv_g S_{k} 
&= \nabla_i S^i_k = \nabla_i \coup*{\restr{A^i_p}{\nu} h^p_k} =  \nabla_i \coup*{\restr{A^i_p}{\nu} } h^p_k + \restr{A^i_p}{\nu} \nabla_i  h^p_k \\ 
&= \restr{D_q A^i_p}{\nu} h^q_i h^p_k + \restr{A^i_p}{\nu} \nabla_i  h^p_k  
= \restr{D_p A^i_q}{\nu} h^q_i h^p_k + \restr{A^i_p}{\nu} \nabla^p  h_{ik} \\  
&= \nabla_k \coup*{ \restr{A^i_p}{\nu} h^p_i } = \nabla_k H_F.
\end{align*}
Now we notice how also the last divergence term can be written as a flat divergence. We find
\begin{align*}
\divv_g S_{ k} 
&= \nabla_i S^i_k  = \partial_i S^i_k + \Gamma_{ip}^i S^p_k - \Gamma^p_{ik} S^i_p = \partial_i S^i_k + v^i h_{ip} S^p_k - v^p h_{ik} S^i_p \\ 
&=   \partial_i S^i_k + v^i h_{ip} A^p_q h^q_k - v^p h_{ik} A^q_p h^i_q = \partial_i S^i_k + v^i A^{pq} \coup* {h_{ip}  h_{qk} - h_{pk}  h_{qi} } \\
&= \partial_i S^i_k + v^i h_{ip}  h_{qk} \coup*{A^{pq} - A^{qp}} = \partial_i S^i_k. 
\end{align*}
\item[\eqref{EquationB}]
We compute the divergence term in equation \eqref{EquationB}. Firstly we compute the divergence of the Ricci tensor:
\begin{align*}
\nabla_i \Ric^i_k
&= \partial_i \Ric^i_k + \Gamma_{ip}^i \Ric^p_k - \Gamma_{ik}^p \Ric^i_p 
= \partial_i \Ric^i_k +v^i h_{ip} \Ric^p_k - v^p h_{ik} \Ric^i_p \\
&= \partial_i \Ric^i_k + v^i h_{ip} \left (H h^p_k - h^p_q h^q_k \right ) - v^p h_{ik} \left ( H h^i_p - h^i_q h^q_p \right ) \\ 
&= \partial_i \Ric^i_k + H \underbrace{ \left (  v^i h_{ip} h^p_k - v^p h_{ik} h^i_p   \right)}_{= v^i h_{ip} h^p_k -   v^i h_{pk} h^p_i  = 0} +  \underbrace{ \left (  v^p h_{ik} h^i_q h^q_p   - v^i h_{ip} h^p_q h^q_k\right )}_{=v^p ( h_{ik} h^i_q h^q_p - h_{pq} h^q_i h^i_k) = 0} = \partial_i \Ric^i_k. 
\end{align*}
Now we write $\tRic^i_j=\Ric^i_j - \frac{\Scal}{n} \delta^i_j$, and notice that $\delta$ is a symmetric tensor. The computation  of it is identical to the previous one, and we are done.
\end{itemize}
Lastly we write in graph chart $\nabla f = \partial f$, since at the first order the Levi-Civita coincides with the classical derivations. These computations show how we have reduced the two problems to the following: 

\begin{lemma}\label{lemmnew}
 Let $M \subset \R^{n+1}$ be a closed hypersurface. Assume $\Sigma$ has fixed volume $V$ and satisfies the assumptions of Lemma \ref{GraphChartLem}, i.e. admits two numbers $L$ and $R$ such that around every $q \in \Sigma$ we can find a chart defined on the ball $\B^n_R$, which is the graph of a smooth, $L$-Lipschitz function $u_q$.  Assume there are $\gu \fromto{M}{\R}$, $\gf \in \Gamma\coup*{T^* M \otimes T^* M}$ that satisfy a differential relation which in every graph parametrisation at every point admits the following form:
\begin{equation}\label{GraphEquationApp}
\partial_k \gu = \partial_i \gf^i_k \ \mbox{ in } \B_R^n.
\end{equation}
Then there exists a $\lambda \in \R$, such that the following estimate holds.
\[
\norm{\gu - \lambda}_{L^p(M)} \le C(n, \, p, \, V, \, R, \, L) \norm{\gf}_{L^p(M)}.
\] 
\end{lemma}
Notice that in both cases we are studying, the manifold $M$ satisfies the assumptions of Lemma \ref{GraphChartLem}, as explained in Remark \ref{noconv}. These will be crucial in the proof.
In the next step, we prove Lemma \ref{lemmnew}.

\subsubsection*{Step 2: Obtaining local estimates: Proof of Lemma \ref{lemmnew}.}
We begin by working in the graph, and observe that $\gu$ has to satisfy the equation:
\[
\Delta_\delta \gu = \partial^k \partial_i \gf^i_k,
\]
where $\Delta_\delta$ is the flat laplacian. 
The estimate for this equation follows by applying the classic Calderon-Zygmund theorem (See 
 \cite[Prop. 1.11]{Daniel} for a detailed proof in this particular case). We find a constant $C_0:=C(n, \, p)$ and a number $\lambda$ such that
\begin{equation}\label{Local}
\norm{\gu - \lambda}_{L^p (\B_{R/2}^n)}  \le C_0 \norm{\gf}_{L^p (\B_R^n)}
\end{equation}
Estimate \eqref{Local} is almost what we want. It is indeed a local estimate, but it concerns all Euclidean quantities. We show how to swap Euclidean measures with manifold metrics, and how to substitute Euclidean balls with geodesic balls.

The first follows easily from equation \eqref{VolumeGraph} and Remark \eqref{noconv}. Since $\Lip(u) \le L$, we obtain indeed 
\[
dx \le  \sqrt{1 + \abs*{\partial u}^2} \, dx  = dV_g \le \sqrt{1 + L^2} \, dx.
\]
Thus the measures are equivalent, and the control constants depend only on $L$. The same constant $L$ controls the switch from the Euclidean metric $\delta$ to the metric $g$.

Now Lemma \ref{GraphChartLem} allows us to pass from Euclidean to geodesic balls and grants our privileged covering of balls. In particular, we obtain the existence of a radius $R$ such that 
\[
\min_{\lambda \in \R }\norm{\gu - \lambda}_{L^p(B_r^g(q))} \le C(n, \, p, \, V, \, L, \, R) \norm{\gf}_{L^p(M)},\qquad \mbox{for every $0 < r \le R$.}
\]

\subsubsection*{Step 3: Making the estimate global.}
To make the estimate global, we follow the technique used in \cite[p. $6$-$7$]{Daniel} and prove the following lemma.
\begin{lemma}\label{Balls}
Let $M$ be a closed manifold, with fixed volume $\Vol_n(M)=V$.
Suppose $\gu \in C^\infty(M)$ has the following property. There is a radius $\rho$ such that for every $x \in M$ the following local estimate is satisfied:
\begin{equation}\label{LocalDue}
\norm{\gu - \lambda(x) }_{L^p(B_{r}^g(x))} \le  \beta,
\end{equation}
where $\lambda(x)$ is a real number depending on $x$, $r \le 2\rho$ and $\beta$ does not depend on $x$. Then there exists $\lambda \in \R$ such that
\[
\norm{\gu - \lambda }_{L^p(M)} \le C(n, \, \rho, \, V) \beta.
\]
\end{lemma}
\begin{proof}
We choose a finite covering of balls $\set{( B_j^g, \, \lambda_j)}_{j=1}^N$ which satisfies the following properties. Every ball $B_j^g$ has radius $2\rho$, estimate \eqref{LocalDue} holds with $\lambda_j$, and for every $j$, $k$ there exists a ball of radius $\rho$ contained in $B_j^g \cap B_k^g$.  

Therefore, given two balls $B^g_j$ and $B^g_k$ whose intersection is non empty, we have: 
\begin{align*}
\abs{\lambda_j - \lambda_k} 
&= \frac{1}{\Vol_n(B^g_j \cap B^g_k)^{\frac{1}{p}}} \norm{\lambda_j - \lambda_k}_{L^p(B^g_j \cap B^g_k)} = \frac{1}{\Vol_n(B^g_j \cap B^g_k)^{\frac{1}{p}}} \norm{\lambda_j - \gu + \gu - \lambda_k}_{L^p(B^g_j \cap B^g_k)} \\
& \le \frac{1}{\Vol_n(B^g_j \cap B^g_k)^{\frac{1}{p}}} \coup*{ \norm{ \gu - \lambda_k}_{L^p(B^g_j \cap B^g_k)} + \norm{\gu - \lambda_k}_{L^p(B^g_j \cap B^g_k)} } \le \frac{2 \, \beta}{\Vol_n(B^g_j \cap B^g_k)^{\frac{1}{p}}}.
\end{align*}
Using the properties of the covering we obtain
\[
\abs{\lambda_j - \lambda_k} \le 2 \Vol_n(B^g_\rho)^{- \frac{1}{p}} \beta.
\]
 Define $\lambda_{\min} := \min_{1 \le j \le n} \lambda_j$ and  $ \lambda_{\max} := \max_{1 \le j \le n} \lambda_j$. Consider a path joining the ball in the cover with $\lambda_{\min}$ to the one with $\lambda_{\max}$. Since this path can cross at most $N$ different balls, we obtain 
\[
\abs{\lambda_{\max} - \lambda_{\min}} \le 2 N  \Vol_n(B^g_\rho)^{- \frac{1}{p}}  \beta = C(n, \, p, \, \rho) \, \beta.
\]
For every $\lambda_{\min} \le \lambda \le  \lambda_{\max}$ we have
\begin{align*}
\norm{\gu - \lambda}_{L^p_\sigma(\esse^n)}
&\le \sum_{j=1}^N \norm{\gu - \lambda}_{L^p_\sigma(B^g_j)} \le \sum_{j=1}^N \norm{\gu - \lambda_j + \lambda_j - \lambda}_{L^p_\sigma(B^g_j)} \\
&\le \sum_{j=1}^N \norm{\gu - \lambda_j}_{L^p_\sigma(B^g_j)} + \abs{\lambda_j - \lambda} \Vol_n(B^g_j)^{-\frac{1}{p}} \\
&\le \sum_{j=1}^N \norm{\gu - \lambda_j}_{L^p_\sigma(B^g_j)} + \abs{\lambda_{\max} - \lambda_{\min}} \Vol_n(B^g_j)^{-\frac{1}{p}} \le C_2(n, \, p, \, \rho) \, \beta
\end{align*}
and the proof of Lemma \ref{Balls} is completed.
\end{proof}

\subsection*{Acknowledgements}
Antonio De Rosa has been partially supported by the NSF DMS Grant No.~1906451 and the NSF DMS Grant No.~2112311.

\end{document}